\newcommand{\de}{\partial}
\newcommand{\db}{\overline{\partial}}
\newcommand{\ddbar}{\sqrt{-1} \partial \overline{\partial}}
\newcommand{\mn}{\sqrt{-1}}
\newcommand{\tr}[2]{\mathrm{tr}_{#1}{#2}}
\newcommand{\ti}[1]{\tilde{#1}}
\newcommand{\vp}{\varphi}
\newcommand{\vol}{\mathrm{Vol}}
\newcommand{\ve}{\varepsilon}
\renewcommand{\leq}{\leqslant}
\renewcommand{\geq}{\geqslant}
\renewcommand{\le}{\leqslant}
\numberwithin{equation}{section}
\begin{document}
\newtheorem{claim}{Claim}
\newtheorem{theorem}{Theorem}[section]
\newtheorem{lemma}[theorem]{Lemma}
\newtheorem{corollary}[theorem]{Corollary}
\newtheorem{proposition}[theorem]{Proposition}
\newtheorem{question}{question}[section]
\newtheorem{conjecture}[theorem]{Conjecture}

\theoremstyle{definition}
\newtheorem{remark}[theorem]{Remark}
\author{Valentino Tosatti}
\title{The Calabi-Yau Theorem and K\"ahler currents}

\thanks{Supported in part by a Sloan Research Fellowship and NSF grant DMS-1308988.}
 \address{Department of Mathematics, Northwestern University, 2033 Sheridan Road, Evanston, IL 60201}
  \email{tosatti@math.northwestern.edu}
\dedicatory{Dedicated to Professor S.-T. Yau on the occasion of his 65th birthday.}
\begin{abstract}
In this note we give an overview of some applications of the Calabi-Yau theorem to the construction of singular positive $(1,1)$ currents on compact complex manifolds. We show how recent developments allow us to give streamlined proofs of existing results, as well as new ones.
\end{abstract}
\maketitle

\section{Introduction}
The Calabi Conjecture \cite{Ca}, solved by S.-T. Yau in 1976 \cite{Ya}, says that on a compact K\"ahler manifold one can find K\"ahler metrics with prescribed Ricci form, equal to any given representative of the first Chern class of the manifold, and the solution is unique once we fix its cohomology class. Equivalently, one can find K\"ahler metrics with prescribed volume form and, when recast in this way, the Calabi-Yau Theorem is equivalent to saying that given a K\"ahler metric $\omega$ and a smooth function $F$ with $\int_X(e^F-1)\omega^n=0$, one can solve the complex Monge-Amp\`ere equation
\begin{equation}\label{ma}
(\omega+\ddbar\vp)^n=e^F\omega^n,\quad \omega+\ddbar\vp>0,\quad\sup_X\vp=0,
\end{equation}
and obtain a unique smooth solution $\vp$. This result is one of the cornerstones of complex geometry, and it has found a myriad of applications, see e.g. \cite{PSS}. In this note we discuss some of these applications to the construction problem of singular closed positive currents on K\"ahler manifolds, where the basic technique is the ``mass concentration'' procedure of Demailly \cite{De}. Roughly speaking, one solves a family of complex Monge-Amp\`ere equations of the form \eqref{ma} where the right-hand side function $F$ degenerates in the limit, and in certain cases one can prove that a sequence of solutions converges to the desired singular current. Constructing such currents is of great importance for applications to algebraic geometry, see e.g. \cite{Bo, BDPP, CT, De, DP} and references therein.

Very recently a new point of view on the mass concentration technique was discovered by Chiose \cite{Ch}, which greatly simplifies the picture. This was further explored by Xiao \cite{Xi} and Popovici \cite{Po}. In this note we observe that these arguments can be easily modified to provide effective estimates. For example, using the technique of Popovici \cite{Po}, in Section \ref{sect1} we prove the following:
\begin{theorem}\label{main}
Let $(X^n,\omega)$ be a compact K\"ahler manifold, and $\alpha,\beta$ two closed real $(1,1)$ forms whose cohomology classes are nef. Suppose also that
$$\int_X\alpha^n-n\int_X\alpha^{n-1}\wedge\beta>0.$$
Then we have
\begin{equation}\label{goal3}
\vol(\alpha-\beta)\geq \frac{\left(\int_X\alpha^n-n\int_X\alpha^{n-1}\wedge\beta\right)^n}{\left(\int_X\alpha^n\right)^{n-1}}>0.
\end{equation}
In particular,
\begin{equation}\label{goal4}
\vol(\alpha-\beta)\geq \int_X\alpha^n-n^2\int_X\alpha^{n-1}\wedge\beta.
\end{equation}
\end{theorem}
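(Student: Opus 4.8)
My plan is to isolate the qualitative bigness statement at the heart of Popovici's method and then derive the effective bounds \eqref{goal3} and \eqref{goal4} from it by a scaling argument. Write $A:=\int_X\alpha^n$ and $B:=\int_X\alpha^{n-1}\wedge\beta$; then $B\ge0$ because $\{\alpha\},\{\beta\}$ are nef, $A>nB\ge0$ by hypothesis, and $\mu_0:=nB/A\in[0,1)$. The qualitative statement I would prove is
\[
(\star)\qquad \eta,\zeta\ \text{nef},\quad \int_X\eta^n>n\int_X\eta^{n-1}\wedge\zeta\quad\Longrightarrow\quad \{\eta-\zeta\}\ \text{is big}.
\]
Granting $(\star)$: for every $\mu'>\mu_0$ the nef classes $\mu'\{\alpha\}$ and $\{\beta\}$ satisfy $\int_X(\mu'\alpha)^n-n\int_X(\mu'\alpha)^{n-1}\wedge\beta=(\mu')^{n-1}(\mu'A-nB)>0$, so $\{\mu'\alpha-\beta\}$ is big, in particular pseudoeffective; letting $\mu'\downarrow\mu_0$ and using that the pseudoeffective cone is closed, $\{\mu_0\alpha-\beta\}$ is pseudoeffective. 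Since $\{\alpha-\beta\}=(1-\mu_0)\{\alpha\}+\{\mu_0\alpha-\beta\}$, monotonicity and homogeneity of the volume together with $\vol(\alpha)=\int_X\alpha^n$ for the nef class $\{\alpha\}$ (see \cite{Bo}) give
\[
\vol(\alpha-\beta)\ \ge\ \vol\big((1-\mu_0)\alpha\big)\ =\ (1-\mu_0)^n\int_X\alpha^n\ =\ \frac{(A-nB)^n}{A^{n-1}},
\]
which is \eqref{goal3}, and it is positive since $A>nB\ge0$. Finally, Bernoulli's inequality $(1-\mu_0)^n\ge1-n\mu_0$, multiplied by $A$, reads $\tfrac{(A-nB)^n}{A^{n-1}}\ge A-n^2B$, which is \eqref{goal4}.

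So everything reduces to $(\star)$, and here I would use Popovici's technique. Fix a Kähler metric $\omega$ on $X$. Since $\eta,\zeta$ are nef, for each $\varepsilon>0$ the classes $\{\eta+\varepsilon\omega\}$ and $\{\zeta+\varepsilon\omega\}$ are Kähler, so choose Kähler forms $\eta_\varepsilon\in\{\eta+\varepsilon\omega\}$ and $\zeta_\varepsilon\in\{\zeta+\varepsilon\omega\}$. By the Calabi--Yau theorem solve
\[
(\eta_\varepsilon+\ddbar\vp_\varepsilon)^n=c_\varepsilon\,\omega^n,\qquad \sup_X\vp_\varepsilon=0,\qquad c_\varepsilon=\frac{\int_X(\eta+\varepsilon\omega)^n}{\int_X\omega^n}\ \longrightarrow\ \frac{\int_X\eta^n}{\int_X\omega^n}>0,
\]
and set $\omega_\varepsilon:=\eta_\varepsilon+\ddbar\vp_\varepsilon>0$, a Kähler form in $\{\eta+\varepsilon\omega\}$. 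Diagonalising $\zeta_\varepsilon$ with respect to $\omega_\varepsilon$, with eigenvalues $\lambda_1,\dots,\lambda_n\ge0$, one has the pointwise inequality of real $(n,n)$-forms
\[
\mathbf{1}_{\{\omega_\varepsilon>\zeta_\varepsilon\}}(\omega_\varepsilon-\zeta_\varepsilon)^n\ \ge\ \omega_\varepsilon^n-n\,\omega_\varepsilon^{n-1}\wedge\zeta_\varepsilon,
\]
since the two sides are $\mathbf{1}_{\{\lambda_j<1\ \forall j\}}\prod_j(1-\lambda_j)\,\omega_\varepsilon^n$ and $\big(1-\sum_j\lambda_j\big)\omega_\varepsilon^n$, and $\prod_j(1-\lambda_j)\ge1-\sum_j\lambda_j$ when all $\lambda_j\le1$, while $1-\sum_j\lambda_j\le0$ otherwise. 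Integrating, and noting that the right-hand side is cohomological, yields, as $\varepsilon\to0$,
\[
\int_{\{\omega_\varepsilon>\zeta_\varepsilon\}}(\omega_\varepsilon-\zeta_\varepsilon)^n\ \ge\ \int_X(\eta+\varepsilon\omega)^n-n\int_X(\eta+\varepsilon\omega)^{n-1}\wedge(\zeta+\varepsilon\omega)\ \longrightarrow\ \int_X\eta^n-n\int_X\eta^{n-1}\wedge\zeta>0.
\]
Thus the smooth closed form $\omega_\varepsilon-\zeta_\varepsilon$, which lies in the fixed class $\{\eta-\zeta\}$, carries a definite amount of positive Monge--Amp\`ere mass on the open set where it is positive definite, uniformly for small $\varepsilon$.

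To conclude, assume for contradiction that $\{\eta-\zeta\}$ is not big, i.e.\ $\{\eta-\zeta-t\omega\}$ is not pseudoeffective for any $t>0$. By Lamari's duality (cf.\ \cite{Ch}) there is, for each such $t$, a Gauduchon metric $\omega_{G,t}$ (so $\ddb(\omega_{G,t}^{n-1})=0$) with $\int_X(\eta-\zeta-t\omega)\wedge\omega_{G,t}^{n-1}<0$; since $\int_X\ddbar f\wedge\omega_{G,t}^{n-1}=0$ for every smooth $f$ and $\{\omega_\varepsilon-\zeta_\varepsilon\}=\{\eta-\zeta\}$, this becomes
\[
\int_X(\omega_\varepsilon-\zeta_\varepsilon)\wedge\omega_{G,t}^{n-1}\ <\ t\int_X\omega\wedge\omega_{G,t}^{n-1}.
\]
The main obstacle is to play this off against the mass bound above: on $\{\omega_\varepsilon>\zeta_\varepsilon\}$ one has the pointwise Khovanskii--Teissier inequality $(\omega_\varepsilon-\zeta_\varepsilon)\wedge\omega_{G,t}^{n-1}\ge\big((\omega_\varepsilon-\zeta_\varepsilon)^n\big)^{1/n}\big(\omega_{G,t}^n\big)^{(n-1)/n}$; on the complementary set $\omega_\varepsilon\wedge\omega_{G,t}^{n-1}\ge c_\varepsilon^{1/n}(\omega^n)^{1/n}(\omega_{G,t}^n)^{(n-1)/n}$ by the equation $\omega_\varepsilon^n=c_\varepsilon\omega^n$; and the nefness of $\zeta$ bounds $\int_X\zeta_\varepsilon\wedge\omega_{G,t}^{n-1}$ by a multiple of $\int_X\omega\wedge\omega_{G,t}^{n-1}$. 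Combining these, after a suitable normalization of $\omega_{G,t}$ and on letting $t\to0$ and then $\varepsilon\to0$, should force $\int_X\eta^n-n\int_X\eta^{n-1}\wedge\zeta\le0$, a contradiction that proves $(\star)$. I expect this last combination --- in particular the delicate control of the contribution of the ``bad set'' $\{\omega_\varepsilon\le\zeta_\varepsilon\}$ and of the parameters $\varepsilon$ and $t$ --- to be the technical core of the argument, exactly as in \cite{Po}.
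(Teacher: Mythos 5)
Your reduction of the effective inequalities to the qualitative statement $(\star)$ is correct and is a genuinely different (and rather elegant) packaging: monotonicity and homogeneity of the volume, together with the identity $\vol(\alpha)=\int_X\alpha^n$ for nef classes, do convert ``$[\mu'\alpha-\beta]$ is big for all $\mu'>nB/A$'' into exactly \eqref{goal3}, and your Bernoulli step is the paper's $(1-nx)^n\geq 1-n^2x$. Two caveats: the volume identity for nef classes is obtained in the paper as a \emph{corollary} of this theorem (the case $\beta=0$), so on your route it must be imported from Boucksom's original proof in \cite{Bo} as an independent input, which is legitimate but costs some of the self-containedness; more importantly, everything now hangs on $(\star)$.

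And your proof of $(\star)$ has a genuine gap: the decisive step --- playing the mass bound $\int_{\{\omega_\ve>\zeta_\ve\}}(\omega_\ve-\zeta_\ve)^n\geq\int_X\eta^n-n\int_X\eta^{n-1}\wedge\zeta$ off against the Lamari-dual Gauduchon metrics $\omega_{G,t}$ --- is precisely where you stop (``should force\dots'', ``I expect this to be the technical core''), and as set up it does not close. You solve the Monge--Amp\`ere equation with right-hand side $c_\ve\,\omega^n$, independent of the Gauduchon metric. Then on the bad set $\{\omega_\ve\leq\zeta_\ve\}$ the $(n,n)$-form $(\omega_\ve-\zeta_\ve)\wedge\omega_{G,t}^{n-1}$ is only bounded below by $-C_\ve\,\omega\wedge\omega_{G,t}^{n-1}$, where $C_\ve$ is the uncontrolled constant in $\zeta_\ve\leq C_\ve\omega$, and nothing prevents the mass of $\omega_{G,t}^{n-1}$ from concentrating on that bad set; the pointwise Khovanskii--Teissier bound on the good set then yields no contradiction. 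Making the right-hand side $\omega^n$ work is essentially the original Demailly--P\u{a}un local mass-concentration argument, a much harder and different proof. The whole point of Chiose's and Popovici's method, which the paper follows, is to let the Gauduchon metric from Lamari's lemma enter the Monge--Amp\`ere equation: one solves $(\alpha+\ddbar u)^n=c\,\beta\wedge\chi^{n-1}$ for \emph{each} Gauduchon $\chi$, and a twofold Cauchy--Schwarz gives Lemma \ref{ineq} with no bad set at all. That inequality says exactly that $\int_X(\alpha_\ve-\beta_\ve)\wedge\chi^{n-1}\geq\delta\int_X\alpha_\ve\wedge\chi^{n-1}$ with the explicit $\delta=\left(\int_X\alpha_\ve^n-n\int_X\alpha_\ve^{n-1}\wedge\beta_\ve\right)/\int_X\alpha_\ve^n$, so Lamari's lemma directly produces a current $T_\ve\geq\delta\alpha_\ve$ in $[\alpha-\beta]$, and integrating $(T_{\ve,ac})^n$ gives \eqref{goal3} with no scaling detour and no appeal to Boucksom's volume identity. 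I recommend you replace your proof of $(\star)$ by this argument.
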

Here $\vol$ denotes the volume of a $(1,1)$ class, as defined by Boucksom \cite{Bo}, namely $\vol(\alpha)$ is the supremum of $\int_X (T_{ac})^n$ over all closed positive currents $T$ in the class $\alpha$, and $T_{ac}$ denotes the absolutely continuous part of $T$ in its Lebesgue decomposition (and we are setting $\vol(\alpha)=0$ if $\alpha$ is not pseudoeffective).
A conjecture of Boucksom-Demailly-P\u{a}un-Peternell \cite[Conjecture 10.1(ii)]{BDPP} says that in fact we should have
$$\vol(\alpha-\beta)\geq \int_X\alpha^n-n\int_X\alpha^{n-1}\wedge\beta.$$

As a corollary of this result, we obtain a simpler proof of a result of Boucksom  \cite[Theorem 4.1]{Bo}:
\begin{corollary}[Boucksom \cite{Bo}]\label{nefcase}
Let $(X^n,\omega)$ be a compact K\"ahler manifold, and $\alpha$ a closed real $(1,1)$ form whose cohomology class is nef, and with $\int_X\alpha^n>0$. Then
$$\vol(\alpha)=\int_X\alpha^n.$$
\end{corollary}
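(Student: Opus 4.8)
The plan is to establish the two inequalities $\vol(\alpha)\le\int_X\alpha^n$ and $\vol(\alpha)\ge\int_X\alpha^n$ separately; only the second uses Theorem~\ref{main}, while the first is a standard fact about nef classes that I would include for completeness.

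For the upper bound I would argue directly from the definition of $\vol$. Let $T$ be any closed positive $(1,1)$-current in $\{\alpha\}$. For $\ve>0$ the class $\{\alpha+\ve\omega\}$ is K\"ahler, since $\{\alpha\}$ is nef, so $T+\ve\omega=\theta_\ve+\ddbar u_\ve$ for some K\"ahler form $\theta_\ve\in\{\alpha+\ve\omega\}$ and some $\theta_\ve$-psh function $u_\ve$. A cut-off argument applied to $u_\ve^{(k)}:=\max(u_\ve,-k)$, for which $\int_X(\theta_\ve+\ddbar u_\ve^{(k)})^n=\int_X\theta_\ve^n$ by Stokes' theorem, gives, upon letting $k\to\infty$, the bound $\int_X\big((T+\ve\omega)_{ac}\big)^n\le\int_X\theta_\ve^n=\int_X(\alpha+\ve\omega)^n$. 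Since $(T+\ve\omega)_{ac}=T_{ac}+\ve\omega\ge T_{ac}\ge 0$ as $(1,1)$-forms, we get $\int_X(T_{ac})^n\le\int_X(\alpha+\ve\omega)^n$, and letting $\ve\to 0$ yields $\int_X(T_{ac})^n\le\int_X\alpha^n$. Taking the supremum over $T$ gives $\vol(\alpha)\le\int_X\alpha^n$.

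For the lower bound I would apply Theorem~\ref{main} to the pair $\alpha':=\alpha+\ve\omega$, $\beta':=\ve\omega$, both of which represent nef (indeed K\"ahler) classes. Its hypothesis reads
\[
\int_X(\alpha+\ve\omega)^n-n\ve\int_X(\alpha+\ve\omega)^{n-1}\wedge\omega>0 ,
\]
and the left-hand side equals $\int_X\alpha^n+O(\ve)$, hence is positive for $\ve$ small because $\int_X\alpha^n>0$. Then \eqref{goal3} gives, for all such $\ve$,
\[
\vol(\alpha)=\vol(\alpha'-\beta')\ge\frac{\left(\int_X(\alpha+\ve\omega)^n-n\ve\int_X(\alpha+\ve\omega)^{n-1}\wedge\omega\right)^n}{\left(\int_X(\alpha+\ve\omega)^n\right)^{n-1}} ,
\]
and letting $\ve\to 0$ the right-hand side tends to $\big(\int_X\alpha^n\big)^n/\big(\int_X\alpha^n\big)^{n-1}=\int_X\alpha^n$. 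Thus $\vol(\alpha)\ge\int_X\alpha^n$, and combining the two bounds proves the corollary.

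I do not expect a genuine obstacle here: the whole point is that Theorem~\ref{main} already contains the hard analysis, and the corollary is obtained from it by the harmless perturbation $\alpha\mapsto\alpha+\ve\omega$, $\beta=\ve\omega$, together with the trivial observation that $\ve\int_X(\alpha+\ve\omega)^{n-1}\wedge\omega\to 0$ as $\ve\to 0$. The only place needing a little care is the upper bound $\vol(\alpha)\le\int_X\alpha^n$; if one prefers, it can be quoted from \cite{Bo} or \cite{BDPP}, but reducing $T$ to a current with analytic singularities via Demailly's regularization theorem \cite{De} (so that $T_{ac}$ agrees with the non-pluripolar Monge--Amp\`ere product) makes the elementary monotonicity argument above fully rigorous.
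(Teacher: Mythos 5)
Your proposal is correct and follows essentially the same route as the paper: the lower bound $\vol(\alpha)\geq\int_X\alpha^n$ comes from Theorem \ref{main} (the paper simply takes $\beta=0$, which is a nef class, so your perturbation $\alpha'=\alpha+\ve\omega$, $\beta'=\ve\omega$ is a harmless but unnecessary detour), and the upper bound is the standard monotonicity fact that the paper just quotes from \cite[Lemma 4.2]{Bo}. The only small slip is that Demailly's regularization theorem is \cite{Dem92}, not \cite{De}.
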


Indeed, Theorem \ref{main} with $\beta=0$ gives $\vol(\alpha)\geq\int_X\alpha^n$, and the reverse inequality is simple (see \cite[Lemma 4.2]{Bo}).

Another application of this technique is Theorem \ref{effect} below, which we then use to simplify another theorem of Boucksom \cite[Theorem 4.7]{Bo}. We also briefly explain how Theorem \ref{effect} together a recent theorem of Collins and the author \cite{CT} provides another proof of Demailly-P\u{a}un's Nakai-Moishezon criterion for K\"ahler manifolds \cite{DP}, and we use this to characterize the points where the Seshadri constant of a nef class vanishes.

Furthermore, this technique also gives some results in the case when the manifold is not assumed to be K\"ahler. In this case the Calabi-Yau theorem is replaced by its Hermitian counterpart, proved by Weinkove and the author \cite{TW0} (see also \cite{Bl, Che, Chu, DK, GL, KN1, KN2, Nie, TW2, TW5} for earlier results and later developments, \cite{FLY, FWW1, FWW2, FY, LY, Po1, TW3, TW4} for other Monge-Amp\`ere type equations on non-K\"ahler manifolds, and \cite{STW} for a very recent Calabi-Yau theorem for Gauduchon metrics on Hermitian manifolds). The key new difficulty is that now in general we have to modify the function $F$ in \eqref{ma} by adding a constant to it, namely we obtain
\begin{equation}\label{manew}
(\omega+\ddbar\vp)^n=e^{F+b}\omega^n,\quad \omega+\ddbar\vp>0,\quad\sup_X\vp=0,
\end{equation}
for a unique smooth function $\vp$ and unique constant $b\in\mathbb{R}$, and this causes trouble in the mass concentrating procedure. Nevertheless, recent advances towards a conjecture of Demailly-P\u{a}un were recently made by Chiose \cite{Ch} and Nguyen \cite{Ng} along these lines. In Section \ref{sect2} we give an improvement of the main result of \cite{TW1}, which extends a theorem of Demailly \cite{De} to the non-K\"ahler case:

\begin{theorem}\label{main2} Let $X$ be a compact complex $n$-manifold with a closed real $(1,1)$-form $\alpha$ such that its class $[\alpha]\in H^{1,1}_{\mathrm{BC}}(X,\mathbb{R})$ which is nef and big.
Given any $x_1, \dots, x_N \in X$ and  positive real numbers $\tau_1, \dots, \tau_N$ with
\begin{equation}\label{condition1}
\sum_{j=1}^N \tau_j^n < \int_X \alpha^n,
\end{equation}
there exists an $\alpha$-plurisubharmonic function $\vp$ with
\begin{equation}\label{want1}
\varphi(z-x_j) \le \tau_j \log  |z-x_j| + O(1),
\end{equation}
in local coordinates near $x_j$, for all $j=1,\dots,N$.
\end{theorem}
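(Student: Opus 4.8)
The strategy follows Demailly's classical mass-concentration technique, adapted to the non-K\"ahler (Hermitian) setting as in \cite{TW1}, but now using the improved estimates from Theorem \ref{main} (or rather its Hermitian analogue, developed via Chiose's and Nguyen's approach) to handle the extra constant $b$ appearing in \eqref{manew}. The plan is to reduce the problem to solving a family of complex Monge-Amp\`ere equations whose right-hand sides concentrate Dirac-like masses at the points $x_1,\dots,x_N$, and then extract a weak limit.

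First I would fix local coordinate charts centered at each $x_j$, and for a small parameter $\ve>0$ construct smooth functions $F_\ve$ on $X$ that are uniformly bounded below, equal to a large multiple of $\log(|z-x_j|^2+\ve^2)$ near each $x_j$ (so that $e^{F_\ve}\omega^n$ carries mass roughly $\tau_j^n$ concentrated near $x_j$ in the limit), and arranged so that $\int_X e^{F_\ve}\omega^n$ converges to $\int_X\alpha^n - \sum_j\tau_j^n>0$, which is strictly positive by \eqref{condition1}. Since $[\alpha]$ is nef and big, for each $\delta>0$ the class $[\alpha]+\delta[\omega]$ is a big class containing a K\"ahler current, and $\int_X(\alpha+\delta\omega)^n>0$; one works with smooth representatives $\alpha_\delta=\alpha+\delta\omega+\ddbar\psi_\delta$ and solves, using the Hermitian Calabi-Yau theorem of \cite{TW0}, the equation $(\alpha_\delta+\ddbar\vp_{\ve,\delta})^n = c_{\ve,\delta}\, e^{F_\ve}\omega^n$ with $\alpha_\delta+\ddbar\vp_{\ve,\delta}>0$, $\sup_X\vp_{\ve,\delta}=0$, where $c_{\ve,\delta}>0$ is the normalizing constant making the total masses match.

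The heart of the argument is then a uniform estimate: I need a lower bound on the Lelong numbers (or on the logarithmic singularities) of the limiting current $T=\alpha+\ddbar\vp$, and this is exactly where the constant $c_{\ve,\delta}$ (the multiplicative analogue of $e^b$ in \eqref{manew}) must be controlled. The key point is to show that $c_{\ve,\delta}$ stays bounded away from $0$ and $\infty$ as $\ve,\delta\to0$; this is where one invokes the volume estimates in the spirit of Theorem \ref{main}, or the Chiose--Nguyen refinement of the non-K\"ahler mass-concentration inequality, to guarantee that the concentrated mass is not absorbed by a degenerating constant. Granting that, a standard argument comparing $\vp_{\ve,\delta}$ with the local plurisubharmonic potential $\tau_j\log(|z-x_j|^2+\ve^2)$ via the comparison principle (or via an $L^\infty$-estimate and the maximum principle in the Hermitian setting) yields the bound $\vp_{\ve,\delta}(z)\le \tau_j\log|z-x_j|+C$ near $x_j$, uniformly in $\ve,\delta$.

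Finally, since the $\vp_{\ve,\delta}$ are $\alpha_\delta$-plurisubharmonic with $\sup_X\vp_{\ve,\delta}=0$, they are uniformly bounded in $L^1(X)$, so after passing to a subsequence $\vp_{\ve,\delta}\to\vp$ in $L^1$, with $\vp$ an $\alpha$-plurisubharmonic function (the $\delta[\omega]$ term disappears in the limit because $[\alpha]$ is nef). The upper semicontinuity of the limit, together with the uniform local upper bounds, passes the estimate \eqref{want1} to $\vp$. The main obstacle, as indicated, is the control of the normalizing constant $c_{\ve,\delta}$: in the K\"ahler case it is pinned down exactly by cohomology, but in the Hermitian case one must argue that the near-degenerate right-hand side does not force $c_{\ve,\delta}\to0$, and this requires the effective volume-type lower bound — the non-K\"ahler counterpart of the inequality proved in Theorem \ref{main} — rather than a purely cohomological identity.
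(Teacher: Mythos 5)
Your outline reproduces the standard mass-concentration framework correctly (concentrated right-hand sides, the Hermitian Calabi--Yau theorem of \cite{TW0} applied to $\alpha+\ve\omega+\ddbar\psi_\ve$, weak $L^1$ limits, uniform local comparison near the $x_j$), and you correctly identify that the whole difficulty is the lower bound on the normalizing constant $c_{\ve,\delta}$. But at exactly that point the proposal has a genuine gap: you say one ``invokes the volume estimates in the spirit of Theorem \ref{main}, or the Chiose--Nguyen refinement,'' and then proceed ``granting that.'' Neither of these tools is actually available here. Theorem \ref{main} (and Lemma \ref{ineq}) is proved on a compact \emph{K\"ahler} manifold --- it uses Yau's theorem and the K\"ahler volume function --- and the paper explicitly does not have a Hermitian analogue of it; producing one in general is essentially Conjecture \ref{dpc}. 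The Chiose and Nguyen results require extra structural hypotheses ($\de\db\omega=0=\de\db(\omega^2)$, or $\alpha$ semipositive), which are not assumed in Theorem \ref{main2}. So the key step is asserted, not proved.

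The mechanism the paper actually uses, and which your proposal is missing, is Demailly's weak transcendental Morse inequality (Theorem \ref{con}): for $[\alpha]$ nef and \emph{big}, any smooth representative $\beta\in[\alpha]$ satisfies $\int_X\alpha^n\le\int_{X(\beta,0)}\beta^n$, where $X(\beta,0)$ is the locus where $\beta\ge 0$. Applying this to $\beta=\alpha+\ddbar(\psi_\ve+\vp_\ve)$ and using $\beta^n\le(\beta+\ve\omega)^n=C_\ve\bigl(\sum_j\tau_j^n\gamma_{j,\ve}^n+\delta\omega^n\bigr)$ on $X(\beta,0)$, one integrates to get $\int_X\alpha^n\le C_\ve\bigl(\sum_j\tau_j^n+\delta\int_X\omega^n\bigr)<C_\ve\int_X\alpha^n$, hence $C_\ve>1$. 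This is precisely where the bigness hypothesis enters (it guarantees $X$ is in class $\mathcal{C}$ and lets one pull back to a K\"ahler model where \cite[Theorem 1.18(a)]{De2} applies), and it is the one new ingredient that distinguishes this theorem from \cite{TW1}. Without supplying this step, or an equivalent control of the constant, the proof does not go through. A minor additional remark: your normalization of $F_\ve$ (total mass tending to $\int_X\alpha^n-\sum_j\tau_j^n$) is not what is needed; the right-hand measure should have total mass $\sum_j\tau_j^n+\delta\int_X\omega^n$, strictly \emph{below} $\int_X\alpha^n$, so that the inequality above forces $C_\ve>1$ and the Lelong numbers at $x_j$ are at least $\tau_j$.
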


In \cite{TW1} this result was conjectured to hold under the a priori weaker assumption that $[\alpha]$ is nef and $\int_X\alpha^n>0$ (which however conjecturally implies that $[\alpha]$ is big, see Conjecture \ref{dpc} below). The new observation we make is that if we assume that $[\alpha]$ is big then a weak transcendental Morse inequality of Demailly \cite{De2} can be used to control the constant $b$ in \eqref{manew} and make the mass concentration procedure work. We end this note with a discussion of a conjecture of Demailly-P\u{a}un \cite{DP}, and explain some recent results related to this, using the techniques discussed above.

\bigskip
\bigskip
\noindent
{\bf Acknowledgements.}
The author would like to thank Professor S.-T. Yau for many useful conversations, and for his advice and support. He is also grateful to S.Boucksom, T.Collins, N.McCleerey, M.P\u{a}un, G.Sz\'ekelyhidi, B.Weinkove and X.Yang for helpful discussions.

\section{The K\"ahler case}\label{sect1}
In this section we give the proof of Theorem \ref{main}, together with some other applications of the Calabi-Yau theorem to the construction of singular currents on K\"ahler manifolds.

Let us start with some preliminary definitions. If $X$ is a compact complex $n$-manifold, a Hermitian metric $\omega$ on $X$ is called Gauduchon if $\de\db(\omega^{n-1})=0$. Such metrics always exist on any compact complex manifold \cite{Ga}. If $\alpha$ is a smooth closed real $(1,1)$ form on $X$, then $\alpha$ defines a Bott-Chern cohomology class
$[\alpha]\in H^{1,1}_{\mathrm{BC}}(X,\mathbb{R})$ (the space of closed real $(1,1)$ forms modulo $\mn\de\db$-exact ones). We say that the class $[\alpha]$ is nef if for every $\ve>0$ there exists a smooth representative $\alpha+\ddbar\vp_\ve$ of $[\alpha]$ which satisfies $\alpha+\ddbar\vp_\ve\geq -\ve\omega$ on $X$, where $\omega$ is any fixed Hermitian metric.

If $\vp$ is just an upper semicontinuous $L^1$ function such that $\alpha+\ddbar\vp\geq 0$ weakly as currents, then we will say that $\vp$ is $\alpha$-plurisubharmonic. Whenever strict positivity holds, in the sense that $\alpha+\ddbar\vp\geq\ve\omega$ for some $\ve>0$, then we say that $\alpha+\ddbar\vp$ is a K\"ahler current, and the class $[\alpha]$ is called big whenever we can find such a K\"ahler current.

We say that a closed real $(1,1)$ current of the form $T=\alpha+\ddbar\vp$ has analytic singularities if there is some $\delta\in\mathbb{R}_{>0}$ and a coherent ideal sheaf $\mathcal{I}\subset\mathcal{O}_X$ such that given any $x\in X$ there is an open neighborhood $U$ of $x$ and there
are $\{f_1,\dots,f_N\}$ holomorphic functions on $U$, which generate $\mathcal{I}(U)$, such that
$$\vp=\delta\log\left(\sum_{i=1}^N|f_i|^2\right)+\psi,$$
holds on $U$, for some smooth function $\psi$ on $U$. Demailly's regularization theorem \cite{Dem92} shows that every closed real $(1,1)$ current can be approximated by currents with analytic singularities, and with an arbitrarily small loss of positivity.

If $T$ has analytic singularities, and say $T\geq \gamma$ for some smooth form $\gamma$, then the coherent ideal sheaf $\mathcal{I}$ which describes the singularities of $T$ can be principalized thanks to a fundamental theorem of Hironaka \cite{Hi}. This means that there exists a modification $\mu:\ti{X}\to X$, which is a composition of blowups with smooth centers, and with $\ti{X}$ a compact complex manifold, such that $\mu^{-1}\mathcal{I}=\mathcal{O}_{\ti{X}}(-D),$ where $D$ is an effective divisor on $\ti{X}$. If $X$ is K\"ahler, then so is $\ti{X}$. Then we have
$$\mu^*T=\theta+[E],$$
where $\theta$ is a smooth form with $\theta\geq \mu^*\gamma$, and $E$ is an effective $\mathbb{R}$-divisor supported on $D$. We will call this procedure a resolution of the singularities of the current $T$.\\

The starting point for our discussion is the following lemma by Lamari \cite[Lemme 3.3]{La}, whose proof rests on the Hahn-Banach theorem (a technique invented by Sullivan \cite{Su}). For the reader's convenience, we provide the simple proof.

\begin{lemma}[Lamari \cite{La}]\label{lama}
Let $(X^n,\omega)$ be a compact Hermitian manifold, and $\alpha$ a real $(1,1)$ form on $X$. Given $\ve\geq 0$, there exists a current of the form $T=\alpha+\ddbar\vp$ with $T\geq\ve\omega$ if and only if
\begin{equation}\label{inequ}
\int_X\alpha\wedge\chi^{n-1}\geq \ve\int_X \omega\wedge\chi^{n-1},
\end{equation}
holds for all Gauduchon metrics $\chi$ on $X$.
\end{lemma}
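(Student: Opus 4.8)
The two directions are of very different natures. The implication ``$\Leftarrow$'' (existence of the current) is the substantial one, and I would obtain it by a Hahn--Banach separation argument in the space of currents, following Sullivan's technique \cite{Su}; the reverse implication is an immediate integration by parts. For ``$\Rightarrow$'': if a current $T=\alpha+\ddbar\vp\geq\ve\omega\geq 0$ exists, then $\vp$ is quasi-plurisubharmonic, hence lies in $L^1(X)$, and $\ddbar\vp=T-\alpha$ is a current of order zero. Given any Gauduchon metric $\chi$, pairing $T$ with $\chi^{n-1}$ and integrating by parts twice, using $\ddbar(\chi^{n-1})=0$, gives $\int_X\ddbar\vp\wedge\chi^{n-1}=0$, whence $\int_X\alpha\wedge\chi^{n-1}=\int_X T\wedge\chi^{n-1}\geq\ve\int_X\omega\wedge\chi^{n-1}$, which is \eqref{inequ}.

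For ``$\Leftarrow$'' I would set $\gamma:=\alpha-\ve\omega$ and observe that a current $T=\alpha+\ddbar\vp\geq\ve\omega$ exists precisely when $\gamma$ belongs to the convex cone
\[
\mathcal{C}:=\left\{\,S+\ddbar u \ :\ S \text{ a positive }(1,1)\text{-current on }X,\ u\in\mathcal{D}'(X,\mathbb{R})\,\right\}
\]
inside the space $\mathcal{D}'_{1,1}(X)$ of real $(1,1)$-currents, equipped with its weak-$*$ topology as the dual of the Fr\'echet space of smooth real $(n-1,n-1)$-forms. Arguing by contradiction, I assume no such $T$ exists, so $\gamma\notin\mathcal{C}$. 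Granting that $\mathcal{C}$ is weakly-$*$ closed, Hahn--Banach separation yields a smooth real $(n-1,n-1)$-form $\Phi$ with $\langle R,\Phi\rangle\geq 0$ for every $R\in\mathcal{C}$ and $\langle\gamma,\Phi\rangle<0$. Since $\mathcal{C}$ contains all positive $(1,1)$-currents, testing against $\delta_x\otimes\tau$ for positive $(1,1)$-covectors $\tau$ at arbitrary $x$ shows that $\Phi$ is a positive $(n-1,n-1)$-form at every point; since $\mathcal{C}$ contains $\pm\ddbar u$ for all $u$, we get $\langle u,\ddbar\Phi\rangle=0$ for all $u$, i.e. $\ddbar\Phi=0$.

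Next I would fix a Gauduchon metric $\omega_G$ (which exists by \cite{Ga}) and set $\Phi_s:=\Phi+s\,\omega_G^{n-1}$; then $\Phi_s$ is strictly positive, still satisfies $\ddbar\Phi_s=0$, and by a theorem of Michelsohn it equals $\chi_s^{\,n-1}$ for a Hermitian metric $\chi_s$, which is therefore Gauduchon. For $s>0$ small enough $\langle\gamma,\Phi_s\rangle<0$, that is $\int_X\alpha\wedge\chi_s^{\,n-1}<\ve\int_X\omega\wedge\chi_s^{\,n-1}$, contradicting \eqref{inequ} and completing the proof.

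The heart of the matter, and the step I expect to require the most care, is the weak-$*$ closedness of $\mathcal{C}$. The mechanism should be: if $S_j+\ddbar u_j$ converges weakly, then pairing with $\omega_G^{n-1}$, which annihilates the exact terms since $\omega_G$ is Gauduchon, bounds the masses of the positive currents $S_j$, so by Banach--Alaoglu a subsequence of the $S_j$ converges weakly to a positive current $S_\infty$; it then remains to check that the corresponding limit of the $\ddbar u_j$ is again of the form $\ddbar u_\infty$, which rests on the closedness of the range of $\ddbar$ acting on $(0,0)$-currents on the compact manifold $X$. Everything else is formal.
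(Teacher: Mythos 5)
Your proof is correct, but it runs the Hahn--Banach duality in the opposite direction from the paper. The paper works in the space of smooth real $(n-1,n-1)$-forms: it separates the hyperplane $H=\{\psi \ \de\db\text{-closed}: \int_X\alpha\wedge\psi=0\}$ from the \emph{open} convex cone of strictly positive forms, so the separating functional is directly the sought positive current $\tau$, and no closedness of any cone has to be verified (geometric Hahn--Banach for an open convex set suffices); the price is a case split according to whether $\int_X\alpha\wedge\chi^{n-1}$ vanishes for some Gauduchon $\chi$ or is strictly positive for all of them, the degenerate case being handled by a separate linear perturbation argument. You instead work in the dual space of currents, separate the point $\gamma=\alpha-\ve\omega$ from the cone $\mathcal{C}=\{S+\ddbar u\}$, and recover the Gauduchon metric from the separating form via Michelsohn's $(n-1)$st root after the perturbation $\Phi+s\omega_G^{n-1}$; this is closer to Sullivan's original formulation, handles both cases uniformly, but shifts the technical burden onto the weak-$*$ closedness of $\mathcal{C}$, which you correctly identify as the crux. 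Your sketch of that step is sound --- the Gauduchon pairing bounds the masses of the positive parts, and the exact parts are handled by the fact that a $(1,1)$-current is $\ddbar$-exact iff it annihilates all $\de\db$-closed $(n-1,n-1)$-forms (elliptic theory for the Bott--Chern/Aeppli complex) --- but note that since the weak-$*$ topology is not metrizable you should either invoke the Banach--Dieudonn\'e/Krein--\v{S}mulian criterion to reduce to equicontinuous (hence metrizable) subsets, or phrase the compactness argument for nets; the paper's duality step, by contrast, uses exactly the same elliptic input but nothing about closed cones of currents. Both routes are legitimate; the paper's is technically lighter, yours is more uniform and generalizes more readily.
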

\begin{proof}
By replacing $\alpha$ with $\alpha-\ve\omega$, it is enough to consider the case $\ve=0$. If there is a current $T=\alpha+\ddbar\vp\geq 0$ then clearly
$$\int_X \alpha\wedge\chi^{n-1}=\int_X T\wedge\chi^{n-1}\geq 0.$$
Conversely assume that $\int_X\alpha\wedge\chi^{n-1}\geq 0$ for all Gauduchon metrics $\chi$.

If there exists a Gauduchon metric $\chi$ such that
$\int_X\alpha\wedge\chi^{n-1}=0$, then given any $\de\db$-closed real $(n-1,n-1)$ form $\psi$ let
$$f(t)=\int_X \alpha \wedge((1-t)\chi^{n-1}+t\psi).$$
We have $f(0)=0$ and for $|t|$ small we can find a Gauduchon metric $\chi_t$ with $\chi_t^{n-1}=(1-t)\chi^{n-1}+t\psi$ (by simple linear algebra, cf. \cite[(4.8)]{Mi}), hence $f(t)\geq 0$ thanks to our assumption.
But clearly $f(t)$ is a linear function of $t$, and so $f\equiv 0$ and therefore $\int_X\alpha\wedge \psi=0$. Since $\psi$ was an arbitrary $\de\db$-closed real $(n-1,n-1)$ form,
by duality we conclude that $\alpha=\ddbar\vp$ for some distribution $\vp$, and this proves what we want (with $T=0$).

If on the other hand we have $\int_X\alpha\wedge\chi^{n-1}>0$ for all Gauduchon metrics $\chi$, consider the set $H$ of all $\de\db$-closed real $(n-1,n-1)$ forms $\psi$ with $\int_X\alpha\wedge\psi=0$, which is a closed linear subspace of the space of real $(n-1,n-1)$ forms. By assumption have that $H$ is disjoint from the convex set $U$ of strictly positive
real $(n-1,n-1)$ forms, and so we can separate $H$ and $U$ using the Hahn-Banach theorem, and obtain a real $(1,1)$ current $\tau$ which is strictly positive on $U$ and vanishes on $H$. This implies that $\tau\geq 0$. For a given Gauduchon metric $\chi_0$ we have $\int_X\alpha\wedge\chi_0^{n-1}>0$ and $\int_X \tau\wedge\chi_0^{n-1}>0$, so we can write
$$\int_X\alpha\wedge\chi_0^{n-1}=\lambda \int_X \tau\wedge\chi_0^{n-1},$$
where $\lambda>0$. Then $\int_X(\alpha-\lambda\tau)\wedge\psi=0$ for all $\psi\in H$ and also for all $\psi$ which are multiples of $\chi_0$. But $H$ and $\chi_0$ together span the whole space of $\de\db$-closed real $(n-1,n-1)$ forms (since $H$ is a hyperplane in here, and $\chi_0\not\in H$), and so we conclude that
$\int_X(\alpha-\lambda\tau)\wedge\psi=0$ for all $\de\db$-closed real $(n-1,n-1)$ forms. By duality we have that $\alpha-\lambda\tau=\ddbar\vp$ for some distribution $\vp$,
and this again proves what we want (with $T=\alpha-\ddbar\vp$).
\end{proof}

We can now give the proof of Theorem \ref{main}. We will follow closely Popovici \cite{Po}, the only improvement being that we make the arguments effective and obtain explicit constants.
\begin{proof}[Proof of Theorem \ref{main}]
By assumption, for every $\ve>0$ there are functions $f_\ve$ and $g_\ve$ such that
$$\alpha_\ve:=\alpha+\ve\omega+\ddbar f_\ve>0,\quad \beta_\ve:=\beta+\ve\omega+\ddbar g_\ve>0,$$
and note that $[\alpha_\ve-\beta_\ve]=[\alpha-\beta]$. Clearly there exists $\ve_0>0$ such that for all $0\leq \ve< \ve_0$ we have
$$\int_X\alpha_\ve^n-n\int_X\alpha_\ve^{n-1}\wedge\beta_\ve>0.$$
From now on fix any $0<\ve<\ve_0$, and let $\delta=\delta_\ve>0$ be given by
$$\delta=\frac{\int_X\alpha_\ve^n-n\int_X\alpha_\ve^{n-1}\wedge\beta_\ve}{\int_X\alpha_\ve^{n}}.$$
If we can show that given any Gauduchon metric $\chi$ on $X$ we have
\begin{equation}\label{goal}
\int_X (\alpha_\ve-\beta_\ve)\wedge\chi^{n-1}=\int_X (\alpha-\beta)\wedge\chi^{n-1}\geq \delta \int_X \alpha_\ve\wedge\chi^{n-1},
\end{equation}
then Lamari's Lemma \ref{lama} would give us a K\"ahler current $T_\ve\in [\alpha-\beta]$ with $T_\ve\geq \delta\alpha_\ve$.
It follows that
$$T_{\ve, ac}\geq \frac{\int_X\alpha_\ve^n-n\int_X\alpha_\ve^{n-1}\wedge\beta_\ve}{\int_X\alpha_\ve^n}\alpha_\ve,$$
and raising this to the power $n$ and integrating over $X$ we obtain
$$\vol(\alpha-\beta)\geq\frac{\left(\int_X\alpha_\ve^n-n\int_X\alpha_\ve^{n-1}\wedge\beta_\ve\right)^n}{\left(\int_X\alpha_\ve^n\right)^{n-1}},$$
and letting $\ve\to 0$ we obtain \eqref{goal3}.
To get \eqref{goal4}, we just use the elementary inequality $(1-nx)^n\geq 1-n^2x,$ for $ 0\leq x<1/n$.

It remains therefore to show \eqref{goal}, which is equivalent to
$$(1-\delta)\int_X\alpha_\ve\wedge\chi^{n-1}\geq \int_X\beta_\ve\wedge\chi^{n-1},$$
or, substituting the value of $\delta$,
$$\left(\int_X \alpha_\ve\wedge\chi^{n-1}\right) \left(\int_X \beta_\ve\wedge\alpha_\ve^{n-1}\right)\geq\frac{1}{n}\left(\int_X\alpha_\ve^n\right)\left(\int_X\beta_\ve\wedge\chi^{n-1}\right).$$
This follows from the following general fact, observed by Popovici \cite{Po}, which completes the proof of Theorem \ref{main}.
\end{proof}
\begin{lemma}[Popovici \cite{Po}]\label{ineq}
Given $X$ compact K\"ahler $n$-manifold with two K\"ahler metrics $\alpha,\beta$ a Gauduchon metric $\chi$, we have
$$\left(\int_X\alpha\wedge\chi^{n-1}\right)\left(\int_X\beta\wedge\alpha^{n-1}\right)\geq\frac{1}{n}\left(\int_X\alpha^n\right)\left(\int_X \beta\wedge\chi^{n-1}\right).$$\end{lemma}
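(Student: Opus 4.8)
The plan is to reduce the stated inequality to a pointwise algebraic inequality between positive $(n,n)$-forms, and then to pass to integrals using the Calabi--Yau theorem.

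\emph{Pointwise step.} The claim is that at every point of $X$
$$\beta\wedge\chi^{n-1}\ \leq\ n\,\frac{(\alpha\wedge\chi^{n-1})\,(\beta\wedge\alpha^{n-1})}{\alpha^n},$$
the right-hand side being the quotient of $(n,n)$-forms; this is the same as $\tr{\chi}{\beta}\leq(\tr{\chi}{\alpha})(\tr{\alpha}{\beta})$. To prove it, fix a point and choose local holomorphic coordinates there in which $\chi$ is the standard (identity) $(1,1)$-form and $\alpha=\mathrm{diag}(\lambda_1,\dots,\lambda_n)$ with $\lambda_j>0$; writing $\beta=(\beta_{j\bar k})$, so $\beta_{j\bar j}\geq 0$, one has at that point
$$\frac{\alpha\wedge\chi^{n-1}}{\chi^n}=\frac1n\sum_j\lambda_j,\qquad \frac{\beta\wedge\alpha^{n-1}}{\chi^n}=\frac1n\sum_j\beta_{j\bar j}\prod_{k\neq j}\lambda_k,\qquad \frac{\alpha^n}{\chi^n}=\prod_j\lambda_j,\qquad \frac{\beta\wedge\chi^{n-1}}{\chi^n}=\frac1n\sum_j\beta_{j\bar j},$$
so the inequality reduces to $\bigl(\sum_j\lambda_j\bigr)\bigl(\sum_k\beta_{k\bar k}/\lambda_k\bigr)\geq\sum_j\beta_{j\bar j}$, which holds because the product on the left already contains the ``diagonal'' terms $\lambda_j\cdot(\beta_{j\bar j}/\lambda_j)=\beta_{j\bar j}$ and the remaining terms are nonnegative. (Alternatively, one can diagonalize $\beta$ with respect to $\chi$ and invoke Fischer's inequality $\det\alpha\leq \alpha_{j\bar j}\,M_{j\bar j}(\alpha)$ for each $j$.)

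\emph{Global step.} Since $\alpha$ and $\beta$ are $d$-closed and $\chi$ is Gauduchon, so that $\de\db(\chi^{n-1})=0$ and hence $\int_X\ddbar u\wedge\chi^{n-1}=0$ for all $u$, each of the four numbers $\int_X\alpha\wedge\chi^{n-1}$, $\int_X\beta\wedge\alpha^{n-1}$, $\int_X\alpha^n$, $\int_X\beta\wedge\chi^{n-1}$ is unchanged if $\alpha$ is replaced by $\alpha+\ddbar u$. Using the Calabi--Yau theorem I would therefore replace $\alpha$ by the cohomologous K\"ahler metric solving $(\alpha+\ddbar u)^n=\bigl(\int_X\alpha^n/\int_X\chi^n\bigr)\chi^n$, so that $\alpha^n$ becomes a constant multiple of $\chi^n$ and $(\beta\wedge\alpha^{n-1})/\alpha^n=\tfrac1n\tr{\alpha}{\beta}$; integrating the pointwise inequality of the first step, using $\alpha\wedge\chi^{n-1}$ as a reference measure, and applying the Cauchy--Schwarz inequality then gives the stated inequality, following Popovici \cite{Po}.

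\emph{Main difficulty.} I expect the global step to be the substantive one. The pointwise inequality alone is genuinely too weak to be integrated directly to the desired product inequality --- already on a ``disconnected'' toy model, where $\alpha$ may be large in one region and small in another, the two sides separate --- so the argument must make essential use of the $d$-closedness of $\alpha$ and $\beta$ and of the Gauduchon condition on $\chi$. The delicate point is to combine the Calabi--Yau normalization with the right Cauchy--Schwarz estimate so that the a priori unbounded anisotropy of the metrics involved does not obstruct the bound.
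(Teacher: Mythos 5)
Your pointwise step is correct and is exactly the elementary inequality $\tr{\chi}{\beta}\leq(\tr{\chi}{\ti{\alpha}})(\tr{\ti{\alpha}}{\beta})$ that the paper also uses, and your observation that all four integrals are unchanged under $\alpha\mapsto\alpha+\ddbar u$ is also correct. The gap is in the global step: you normalize with the wrong Monge--Amp\`ere equation. If you solve $(\alpha+\ddbar u)^n=\bigl(\int_X\alpha^n/\int_X\chi^n\bigr)\chi^n$ and set $F=\tr{\chi}{\ti{\alpha}}$, $G=\tr{\ti{\alpha}}{\beta}$, $H=\tr{\chi}{\beta}$, the lemma becomes
$$\left(\int_XF\,\chi^n\right)\left(\int_XG\,\chi^n\right)\geq\left(\int_X\chi^n\right)\left(\int_XH\,\chi^n\right),$$
and this simply does not follow from $H\leq FG$ plus Cauchy--Schwarz: Cauchy--Schwarz bounds $\int F\cdot\int G$ from \emph{below} by $\bigl(\int\sqrt{FG}\,\bigr)^2$, while $\int 1\cdot\int H\geq\bigl(\int\sqrt{H}\,\bigr)^2$ also goes from below, so both sides are only compared to a common lower bound; the alternative route via $\int H\leq\int FG$ would need the reverse Chebyshev inequality $\int 1\cdot\int FG\leq\int F\cdot\int G$, which fails for positively correlated functions. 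Indeed the purely formal implication is false: on a two-point probability space with $F=G=(2,1)$ and $H=FG=(4,1)$ one gets $\int F\cdot\int G=\tfrac94<\tfrac52=\int 1\cdot\int H$. So your sketch, as written, cannot be closed.

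The actual content of Popovici's trick, which is what the paper does, is to choose the right-hand side of the Monge--Amp\`ere equation to be (a constant times) $\beta\wedge\chi^{n-1}$, not $\chi^n$: one solves
$$(\alpha+\ddbar u)^n=\left(\frac{\int_X\alpha^n}{\int_X\beta\wedge\chi^{n-1}}\right)\beta\wedge\chi^{n-1},\qquad \ti{\alpha}:=\alpha+\ddbar u>0,$$
so that $\ti{\alpha}^n$ is proportional to $\tr{\chi}{\beta}\,\chi^n$. Writing $\int_X\ti{\alpha}^n=\int_X(\ti{\alpha}^n/\chi^n)^{1/2}(\ti{\alpha}^n/\chi^n)^{1/2}\chi^n$ and replacing one square root by $(C\tr{\chi}{\beta}/n)^{1/2}$ via the equation, the factor $\tr{\chi}{\beta}$ is absorbed under the square root, where your pointwise inequality applies; a single Cauchy--Schwarz with respect to $\chi^n$, splitting the integrand as $\bigl[(\ti{\alpha}^n/\chi^n)\tr{\ti{\alpha}}{\beta}\bigr]^{1/2}\cdot\bigl[\tr{\chi}{\ti{\alpha}}\bigr]^{1/2}$, then produces exactly $n\int_X\beta\wedge\ti{\alpha}^{n-1}$ and $n\int_X\ti{\alpha}\wedge\chi^{n-1}$ and closes the argument. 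You should redo your global step with this normalization; everything else in your proposal can be kept.
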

This inequality is sharp, as simple examples on the torus show.
\begin{proof}[Proof of Lemma \ref{ineq}]
 Use Yau's theorem \cite{Ya} to solve the complex Monge-Amp\`ere equation
$$(\alpha+\ddbar u)^n=\left(\frac{\int_X\alpha^n}{\int_X\beta\wedge\chi^{n-1}}\right)\beta\wedge\chi^{n-1},\quad \ti{\alpha}:=\alpha+\ddbar u>0,$$
where $u$ is a smooth function.
Using the Monge-Amp\`ere equation, we have that
\[\begin{split}
\int_X\ti{\alpha}^n&=\left(\int_X\left(\frac{\ti{\alpha}^n}{\chi^n}\right)^{\frac{1}{2}}\left(\frac{\beta\wedge\chi^{n-1}}{\chi^n}\right)^{\frac{1}{2}}\chi^n
\right)\left(\int_X\alpha^n\right)^{\frac{1}{2}}\left(\int_X\beta\wedge\chi^{n-1}\right)^{-\frac{1}{2}}\\
&=\frac{1}{\sqrt{n}}\left(\int_X\left(\frac{\ti{\alpha}^n}{\chi^n}\right)^{\frac{1}{2}}\left(\tr{\chi}{\beta}\right)^{\frac{1}{2}}\chi^n
\right)\left(\int_X\alpha^n\right)^{\frac{1}{2}}\left(\int_X\beta\wedge\chi^{n-1}\right)^{-\frac{1}{2}},
\end{split}\]
and dividing out and squaring, we get
\[\begin{split}
\int_X\ti{\alpha}^n&=\frac{1}{n}\left(\int_X\left(\frac{\ti{\alpha}^n}{\chi^n}\right)^{\frac{1}{2}}\left(\tr{\chi}{\beta}\right)^{\frac{1}{2}}\chi^n
\right)^2\left(\int_X\beta\wedge\chi^{n-1}\right)^{-1}.
\end{split}\]
Now we have the elementary inequality $\tr{\chi}{\beta}\leq (\tr{\chi}{\ti{\alpha}})(\tr{\ti{\alpha}}{\beta})$, and so
\[\begin{split}
\int_X\ti{\alpha}^n&\leq\frac{1}{n}\left(\int_X\left(\frac{\ti{\alpha}^n}{\chi^n} \tr{\ti{\alpha}}{\beta}\right)^{\frac{1}{2}}\left(\tr{\chi}{\ti{\alpha}}\right)^{\frac{1}{2}}\chi^n
\right)^2\left(\int_X\beta\wedge\chi^{n-1}\right)^{-1}\\
&\leq \frac{1}{n}\left(\int_X (\tr{\ti{\alpha}}{\beta}) \ti{\alpha}^n\right)\left(\int_X (\tr{\chi}{\ti{\alpha}})\chi^n\right)\left(\int_X\beta\wedge\chi^{n-1}\right)^{-1}\\
&=n\left(\int_X \beta\wedge\ti{\alpha}^{n-1}\right)\left(\int_X \ti{\alpha}\wedge\chi^{n-1}\right)\left(\int_X\beta\wedge\chi^{n-1}\right)^{-1},
\end{split}\]
thanks to Cauchy-Schwarz, and we are done.
\end{proof}

Let us also observe the following consequence of the proof of Theorem \ref{main}.

\begin{theorem}\label{effect}
Let $(X^n,\omega)$ be a compact K\"ahler manifold, $\alpha$ a closed real $(1,1)$ form whose cohomology class $[\alpha]$ is nef with $\int_X\alpha^n>0$.
Then there exists a closed positive current $T\in[\alpha]$ with
$$T\geq\frac{\int_X\alpha^n}{n\int_X\alpha^{n-1}\wedge\omega}\omega.$$
The same result holds if $X$ is still K\"ahler but we replace $\omega$ with $\beta$, a closed semipositive real $(1,1)$ form with $\int_X\beta^n>0$.
\end{theorem}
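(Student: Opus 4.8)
The plan is to run the Popovici--Lamari mechanism of the proof of Theorem \ref{main}, but without subtracting $\beta$ at the very end, so that the class one lands in is $[\alpha]$ itself. I would handle both assertions simultaneously by writing $\gamma$ for either $\omega$ or the semipositive form $\beta$ of the second statement; in either case $[\gamma]$ is nef with $\int_X\gamma^n>0$. Set
\[
c=\frac{\int_X\alpha^n}{n\int_X\alpha^{n-1}\wedge\gamma},
\]
which is a positive number: since $[\alpha]$ is nef with $\int_X\alpha^n>0$ it is big, and then every mixed intersection number $\int_X\alpha^k\wedge\gamma^{n-k}$ is positive, by the Khovanskii--Teissier inequalities for nef classes. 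By Lamari's Lemma \ref{lama}, applied with $\ve=0$ to the smooth real $(1,1)$ form $\alpha-c\gamma$, it is enough to prove that
\begin{equation}\label{eff-goal}
\int_X(\alpha-c\gamma)\wedge\chi^{n-1}\ge 0
\end{equation}
for every Gauduchon metric $\chi$ on $X$; indeed this would produce a current $S=(\alpha-c\gamma)+\ddbar\vp\ge 0$, and then $T:=\alpha+\ddbar\vp=S+c\gamma$ would be a closed positive current in $[\alpha]$ with $T\ge c\gamma$, as required.

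To establish \eqref{eff-goal} I would argue by approximation and pass to a limit. Fix $\ve>0$ and, using that $[\alpha]$ is nef, pick $f_\ve$ with $\alpha_\ve:=\alpha+\ve\omega+\ddbar f_\ve>0$; also put $\gamma_\ve:=\gamma+\ve\omega$, which is a K\"ahler metric because $\gamma$ is closed and semipositive. Popovici's Lemma \ref{ineq}, applied to the K\"ahler metrics $\alpha_\ve,\gamma_\ve$ and the Gauduchon metric $\chi$, and divided through by the positive number $\int_X\alpha_\ve^{n-1}\wedge\gamma_\ve$, gives
\[
\int_X\alpha_\ve\wedge\chi^{n-1}\ge c_\ve\int_X\gamma_\ve\wedge\chi^{n-1},\qquad c_\ve:=\frac{\int_X\alpha_\ve^n}{n\int_X\alpha_\ve^{n-1}\wedge\gamma_\ve}.
\]
Since $\chi^{n-1}$ is $\de\db$-closed one has $\int_X\ddbar f_\ve\wedge\chi^{n-1}=0$, so $\int_X\alpha_\ve\wedge\chi^{n-1}=\int_X\alpha\wedge\chi^{n-1}+\ve\int_X\omega\wedge\chi^{n-1}$ and likewise $\int_X\gamma_\ve\wedge\chi^{n-1}=\int_X\gamma\wedge\chi^{n-1}+\ve\int_X\omega\wedge\chi^{n-1}$; substituting yields
\[
\int_X\alpha\wedge\chi^{n-1}\ge c_\ve\int_X\gamma\wedge\chi^{n-1}+(c_\ve-1)\,\ve\int_X\omega\wedge\chi^{n-1}.
\]
The numbers $\int_X\alpha_\ve^n$ and $\int_X\alpha_\ve^{n-1}\wedge\gamma_\ve$ depend only on cohomology classes, hence converge to $\int_X\alpha^n$ and $\int_X\alpha^{n-1}\wedge\gamma$ as $\ve\to 0$, so $c_\ve\to c$ and the last term tends to $0$; letting $\ve\to 0$ then gives $\int_X\alpha\wedge\chi^{n-1}\ge c\int_X\gamma\wedge\chi^{n-1}$, which is \eqref{eff-goal}.

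I expect the only delicate point to be the limit $\ve\to 0$: one needs $\int_X\alpha^{n-1}\wedge\gamma>0$ so that $c$ is finite and $c_\ve\to c$, and one has to check that the perturbation term $(c_\ve-1)\,\ve\int_X\omega\wedge\chi^{n-1}$ genuinely disappears in the limit --- which it does, precisely because $\alpha$ and $\gamma$ are perturbed by the same multiple of $\omega$ and $\int_X\omega\wedge\chi^{n-1}$ is a fixed finite quantity. Apart from this bookkeeping, the argument is an immediate invocation of Lemmas \ref{ineq} and \ref{lama}, following the method of Theorem \ref{main} with the subtracted class taken to be zero.
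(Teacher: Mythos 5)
Your argument is correct and is essentially the paper's proof: both reduce to K\"ahler forms by adding $\ve\omega$, apply Popovici's Lemma \ref{ineq}, and invoke Lamari's Lemma \ref{lama}; you merely pass to the limit $\ve\to 0$ in the integral inequality for each fixed Gauduchon $\chi$ rather than in the currents themselves, which if anything makes the reduction more transparent. One small remark: the appeal to bigness of $[\alpha]$ to justify $\int_X\alpha^{n-1}\wedge\gamma>0$ is unnecessary (and risks circularity, since bigness of nef classes of positive volume is precisely what Theorem \ref{effect} is later used to recover); the Khovanskii--Teissier inequality for nef classes, which the paper quotes immediately after the statement, already gives the positivity you need.
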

Note that the denominator does not vanish thanks to the Khovanskii-Teissier convexity inequality (see \cite[Proposition 5.2]{De})
$$\int_X\alpha^{n-1}\wedge\omega\geq \left(\int_X\alpha^n\right)^{\frac{n-1}{n}}\left(\int_X\omega^n\right)>0.$$
\begin{proof}
It is enough to prove the more general statement with $\beta$ semipositive with positive volume.
Replacing $[\alpha]$ by $[\alpha+\ve\omega], \ve>0$, and $\beta$ by $\beta+\ve\omega$, and then taking $\ve\to 0$, we are reduced to the case when $[\alpha]$ is a K\"ahler and $\beta$ is a K\"ahler form.
Thanks to Lamari's Lemma \ref{lama}, all we need to show is that for any Gauduchon metric $\chi$ on $X$ we have
$$\int_X\alpha\wedge\chi^{n-1}\geq \frac{\int_X\alpha^n}{n\int_X\alpha^{n-1}\wedge\beta}\int_X\beta\wedge\chi^{n-1},$$
and this is precisely the inequality proved in Lemma \ref{ineq}.
\end{proof}

As a corollary, we prove the following theorem of Boucksom \cite[Theorem 4.7]{Bo}. The proof is the same as Boucksom's, except that we replace the complicated mass concentration arguments there (which are generalizations of the ones in \cite{DP}) with the simpler Theorem \ref{effect}.

\begin{theorem}\label{vol}
Let $(X^n,\omega)$ be a compact K\"ahler manifold, and $\alpha$ a closed real $(1,1)$ form. Then the class $[\alpha]$ is big iff $\vol(\alpha)>0$.
\end{theorem}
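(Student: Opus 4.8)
The plan is to prove both implications of Theorem \ref{vol}. The forward direction is the easy one: if $[\alpha]$ is big, there exists a K\"ahler current $T = \alpha + \ddbar\vp \geq \ve\omega$ for some $\ve > 0$. Using Demailly's regularization theorem (cited in the excerpt), we may approximate $T$ by currents $T_k$ with analytic singularities, still satisfying $T_k \geq \frac{\ve}{2}\omega$, say. Passing to a resolution $\mu : \ti X \to X$ as described in the excerpt, we write $\mu^* T_k = \theta_k + [E_k]$ with $\theta_k \geq \frac{\ve}{2}\mu^*\omega$ a smooth form, hence $\theta_k$ is semipositive (in fact $\ti X$ is K\"ahler and $\theta_k$ represents a nef class). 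The point is that $\int_{\ti X}\theta_k^n = \int_{\ti X}(\mu^* T_k)^n_{ac} \leq \vol(\mu^*[\alpha]) = \vol(\alpha)$ (volume is a birational invariant, \cite{Bo}), while on the other hand one shows $\int_{\ti X}\theta_k^n > 0$: this is where a little work is needed, but it follows because $\theta_k$ dominates a small K\"ahler class minus an effective divisor with mass controlled, or more simply because $\mu^*[\alpha] - \{E_k\}$ is nef and big on $\ti X$ — indeed it contains the K\"ahler current $\theta_k \geq \frac{\ve}{2}\mu^*\omega$ minus nothing. So $\int_{\ti X}\theta_k^n > 0$, whence $\vol(\alpha) > 0$.

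For the reverse direction, suppose $\vol(\alpha) > 0$. By definition of volume there is a closed positive current $S \in [\alpha]$ with $\int_X (S_{ac})^n > 0$; applying Demailly regularization again we may assume $S$ has analytic singularities, and after resolving we get $\mu : \ti X \to X$ with $\mu^* S = \theta + [E]$, $\theta$ smooth semipositive (and its class nef on the K\"ahler manifold $\ti X$), with $\int_{\ti X}\theta^n = \int_{\ti X}(\mu^* S)^n_{ac} = \int_X (S_{ac})^n > 0$. Now I would invoke Theorem \ref{effect} on $\ti X$ with the nef class $[\theta]$ and $\int_{\ti X}\theta^n > 0$: this produces a closed positive current $\ti T \in [\theta]$ with
$$\ti T \geq \frac{\int_{\ti X}\theta^n}{n\int_{\ti X}\theta^{n-1}\wedge\ti\omega}\,\ti\omega =: c\,\ti\omega,$$
where $\ti\omega$ is a fixed K\"ahler metric on $\ti X$ and $c > 0$ (the denominator being positive by the Khovanskii--Teissier inequality, as noted after Theorem \ref{effect}). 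Thus $[\theta]$ is big on $\ti X$.

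It then remains to descend the bigness of $[\theta]$ on $\ti X$ to bigness of $[\alpha]$ on $X$. Since $\mu^*[\alpha] = [\theta] + \{E\}$ with $\{E\}$ the class of an effective divisor, we get $\mu^*[\alpha] = [\theta + [E]] \geq [\theta]$ in the sense of currents, so $\mu^*[\alpha]$ contains the K\"ahler current $\ti T + [E] \geq c\,\ti\omega$, i.e. $\mu^*[\alpha]$ is big on $\ti X$. Now push forward: $\mu_*(\ti T + [E])$ is a closed positive current in $[\alpha]$, and since $\mu$ is an isomorphism outside a subvariety of complex codimension $\geq 1$ (indeed outside the exceptional locus, which $\mu$ maps into a proper analytic subset $Z \subset X$), the pushed-forward current dominates $c\,\mu_*\ti\omega$, which is a K\"ahler current on $X \setminus Z$; a standard argument (regularizing and using that $Z$ has measure zero, together with the fact that a positive current that is a K\"ahler current off a proper analytic subset and has a global potential can be corrected to a genuine K\"ahler current — e.g. by the Grauert--Riemenschneider type construction in \cite{Bo}, or by adding a small negative multiple of a current of integration and reabsorbing) shows $[\alpha]$ contains a genuine K\"ahler current, hence is big.

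I expect the main obstacle to be precisely this last descent step: verifying that bigness is preserved under the pushforward $\mu_*$, i.e. that $\mu^*[\alpha]$ big on $\ti X$ implies $[\alpha]$ big on $X$. The clean way is to quote that bigness of a class is a bimeromorphic invariant for modifications between compact K\"ahler manifolds — this is in Boucksom's work \cite{Bo} and is essentially formal once one knows $\vol$ is preserved, since by the forward direction (already proved above) bigness is equivalent to $\vol > 0$, and $\vol(\mu^*[\alpha]) = \vol(\alpha)$. So in fact the logically economical route is: (forward direction) bigness $\Rightarrow \vol > 0$; then for the converse, $\vol(\alpha) > 0 \Rightarrow \vol(\mu^*[\alpha]) = \int_{\ti X}\theta^n > 0$ is not even needed — instead directly resolve, get $[\theta]$ nef with positive top self-intersection on $\ti X$, apply Theorem \ref{effect} to conclude $[\theta]$ big, hence $\mu^*[\alpha] \geq [\theta]$ big, hence (by the invariance, or by the forward direction applied on $\ti X$ combined with $\vol(\mu^*[\alpha]) \geq \vol[\theta] > 0$ and then $\vol(\alpha) = \vol(\mu^*[\alpha])$... ) — one sees the argument is slightly circular unless one has the pushforward statement as a black box, so I would simply cite it from \cite{Bo} and keep the proof short, exactly as the excerpt promises ("The proof is the same as Boucksom's, except that we replace the complicated mass concentration arguments there with the simpler Theorem \ref{effect}").
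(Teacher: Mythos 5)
Your overall strategy for the hard direction (resolve the singularities of a current computing the volume, apply Theorem \ref{effect} upstairs, push forward) is the same as the paper's, and your forward direction is correct though far more complicated than necessary: from $T\geq\ve\omega$ one gets $T_{ac}\geq\ve\omega$ almost everywhere, hence $\vol(\alpha)\geq\ve^n\int_X\omega^n>0$ in one line, with no regularization or resolution needed. However, the reverse direction as you wrote it has a genuine gap at the very first step: Demailly regularization does \emph{not} let you ``assume $S$ has analytic singularities'' while staying positive. It produces a sequence $T_k\in[\alpha]$ with analytic singularities satisfying only $T_k\geq-\ve_k\omega$ with $\ve_k\to 0$, so after resolving you get $\theta_k\geq-\ve_k\mu_k^*\omega$, which is \emph{not} semipositive and whose class is not nef; you must work with the nef classes $[\theta_k+\ve_k\mu_k^*\omega]$ and beat the error $-\ve_k\omega$ at the end. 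This forces two uniform estimates that are absent from your argument: a lower bound $\int_{X_k}(\theta_k+\ve_k\mu_k^*\omega)^n\geq\eta>0$ independent of $k$ (the paper gets this from $(T_k)_{ac}\to S_{ac}$ a.e.\ and Fatou's lemma), and an upper bound on $\int_{X_k}(\theta_k+\ve_k\mu_k^*\omega)^{n-1}\wedge\mu_k^*\omega$ independent of $k$ (this is \cite[Proposition 2.6]{Bo}). Only with both does Theorem \ref{effect} give $\ti{T}_k\geq\eta'\mu_k^*\omega$ with $\eta'>0$ uniform, which for $k$ large with $\ve_k<\eta'/2$ survives the subtraction of $\ve_k\omega$.

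The descent step that worries you, and that you propose to outsource to bimeromorphic invariance of bigness (correctly noting the risk of circularity), is resolved in the paper by a choice you overlooked: Theorem \ref{effect} is deliberately stated with an arbitrary closed \emph{semipositive} form $\beta$ of positive volume, not just a K\"ahler metric. Taking $\beta=\mu_k^*\omega$ (semipositive with $\int_{X_k}(\mu_k^*\omega)^n=\int_X\omega^n>0$) yields $\ti{T}_k\geq\eta'\mu_k^*\omega$ directly, and since $(\mu_k)_*\mu_k^*\omega=\omega$, the pushforward $(\mu_k)_*(\ti{T}_k+[E_k])-\ve_k\omega$ is immediately $\geq(\eta'/2)\omega$, a K\"ahler current in $[\alpha]$. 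No regularization off the exceptional locus, no Grauert--Riemenschneider-type correction, and no appeal to invariance of volume or bigness is needed. (Your version with a K\"ahler metric $\ti{\omega}$ on $\ti{X}$ could also be salvaged by noting $\ti{\omega}\geq\delta\mu^*\omega$ for some $\delta>0$ by compactness, but the hand-waving about ``a K\"ahler current off a proper analytic subset'' is not a proof as it stands.)
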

\begin{proof}
If $[\alpha]$ is big then it contains a K\"ahler current $T\geq \ve\omega$ for some $\ve>0$. Then $T_{ac}\geq \ve\omega$ as well, and so
$$\vol(\alpha)\geq\int_X(T_{ac})^n\geq\ve^n\int_X\omega^n>0.$$
Assume conversely that $\vol(\alpha)>0$, so by definition there is a  closed positive current $S$ in the class $[\alpha]$ with
$$\int_X(S_{ac})^n=\frac{\vol(\alpha)}{2}>0.$$
Applying Demailly's regularization (see \cite{Dem92} or \cite[Theorem 2.4]{Bo}) to $S$ we obtain closed positive currents $T_{k}\in[\alpha]$ with analytic singularities, with $T_{k}\geq -\ve_k\omega$, $\ve_k\to 0$, and with $(T_{k})_{ac}\to S_{ac}$ a.e. as $k\to\infty$. Thanks to Fatou's lemma,
$$\liminf_{k\to\infty}\int_X\left(\left(T_{k}+\ve_k\omega\right)_{ac}\right)^n\geq \int_X(S_{ac})^n,$$
and so for all $k$ large we have
$$T_k\geq -\ve_k\omega,\quad \int_X ((T_k+\ve_k\omega)_{ac})^n\geq \eta>0,$$
for some $\eta>0$. Let $\mu_k:X_k\to X$ be a resolution of the singularities of $T_k$, so that $\mu_k$ is a composition of blowups with smooth centers (and so $X_k$ is K\"ahler) and we have
$$\mu_k^*T_k=\theta_k+[E_k],$$
where $\theta_k$ is a smooth closed real $(1,1)$ form with $\theta_k\geq -\ve_k\mu_k^*\omega$, and $E_k$ is an effective $\mathbb{R}$-divisor. We have
$$\int_X((T_k+\ve_k\omega)_{ac})^n=\int_{X_k} ((\mu_k^*T_k+\ve_k\mu_k^*\omega)_{ac})^n=\int_{X_k} (\theta_k+\ve_k \mu_k^*\omega)^n\geq \eta>0,$$
and so the nef class $[\theta_k+\ve_k\mu_k^*\omega]$ on $X_k$ satisfies the hypotheses of Theorem \ref{effect}, where we take $\beta=\mu_k^*\omega$.
We obtain a closed positive current $\ti{T}_k$ on $X_k$ in the class $[\theta_k+\ve_k\mu_k^*\omega]$ satisfying
$$\ti{T}_{k}\geq \frac{\int_{X_k}(\theta_k+\ve_k \mu_k^*\omega)^n}{n\int_{X_k}(\theta_k+\ve_k \mu_k^*\omega)^{n-1}\wedge \mu_k^*\omega} \mu_k^*\omega.$$
As remarked above we have $\int_{X_k}(\theta_k+\ve_k \mu_k^*\omega)^n\geq \eta>0,$ and we also have
$$\int_{X_k}(\theta_k+\ve_k \mu_k^*\omega)^{n-1}\wedge \mu_k^*\omega=\int_X
((T_k+\ve_k\omega)_{ac})^{n-1}\wedge\omega,$$
which is bounded above independent of $k$ (see \cite[Proposition 2.6]{Bo}). It follows that there is a constant $\eta'>0$ such that for all $k$ large we have
$$\ti{T}_{k}\geq \eta'\mu_k^*\omega,$$
and so for $k$ large we obtain
$$(\mu_k)_*(\ti{T}_{k}+[E_k])-\ve_k\omega\geq \frac{\eta'}{2}\omega,$$
which is a K\"ahler current in the class $[\alpha]$.
\end{proof}

Let now $X$ be a compact complex manifold and $[\alpha]$ a big class, i.e. it contains a K\"ahler current. By Demailly's regularization \cite{Dem92} the class $[\alpha]$ contains a K\"ahler current with analytic singularities, and therefore if we define the non-K\"ahler locus $E_{nK}(\alpha)$ to be the intersection of the singular loci of all K\"ahler currents with analytic singularities in the class $[\alpha]$ then we see that $E_{nK}(\alpha)$ is a proper analytic subvariety of $X$. If the class $[\alpha]$ is not big, then we just set $E_{nK}(\alpha)=X$.

The main theorem of \cite{CT}, which reproves and generalizes algebro-geometric results by Nakamaye \cite{Nak} and Ein-Lazarsfeld-Musta\c{t}\u{a}-Nakamaye-Popa \cite{ELMNP}, then can be stated as follows:
\begin{theorem}[Collins-Tosatti \cite{CT}]\label{null} Let $X$ be a compact complex manifold and $\alpha$ a closed real $(1,1)$ form whose cohomology class $[\alpha]$ is nef and big. Then we have
$$E_{nK}(\alpha)=\bigcup_{\int_V\alpha^{\dim V}=0}V,$$
where the union is over all irreducible analytic subvarieties $V\subset X$ with $\int_V\alpha^{\dim V}=0$.
\end{theorem}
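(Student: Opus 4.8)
\medskip
\noindent\textbf{Proof proposal for Theorem \ref{null}.}
Write $\mathrm{Null}(\alpha):=\bigcup_{\int_V\alpha^{\dim V}=0}V$, so the claim is $E_{nK}(\alpha)=\mathrm{Null}(\alpha)$. The plan is to prove the two inclusions separately: $\mathrm{Null}(\alpha)\subseteq E_{nK}(\alpha)$ is an elementary restriction argument, while $E_{nK}(\alpha)\subseteq\mathrm{Null}(\alpha)$ is the substantial direction, which I would prove by induction on $n=\dim X$, feeding in Theorems \ref{effect} and \ref{vol} at the inductive step.

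\emph{First inclusion.} I argue by contraposition. Let $V\subseteq X$ be irreducible with $V\not\subseteq E_{nK}(\alpha)$; then there is a K\"ahler current $T=\alpha+\ddbar\vp$ with analytic singularities, $T\geq\ve\omega$, with $V\not\subseteq\mathrm{Sing}(T)$. Hence $\vp|_V\not\equiv-\infty$, $T|_V$ is a well-defined closed positive current on $V$ in the class $\alpha|_V$, and $T|_V\geq\ve\,\omega|_V$ off the proper analytic subset $\mathrm{Sing}(T)\cap V$. Choosing a resolution $\nu:\hat V\to V$ that also resolves the analytic singularities of $T|_V$, we get $\nu^*(T|_V)=\hat\theta+[\hat E]$ with $\hat\theta$ smooth, $\hat E$ effective, and (by continuity) $\hat\theta\geq\ve\,\nu^*(\omega|_V)\geq0$; since $\nu^*(\omega|_V)$ is strictly positive off the $\nu$-exceptional set, $\int_{\hat V}\hat\theta^{\dim V}\geq\ve^{\dim V}\int_V\omega^{\dim V}>0$. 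As $\nu^*(T|_V)$ is a closed positive current in the nef class $\nu^*(\alpha|_V)$, the definition of $\vol$ together with the elementary bound $\vol(\cdot)\leq\int_{\hat V}(\cdot)^{\dim V}$ for nef classes (see \cite[Lemma 4.2]{Bo}) gives
\[\int_V\alpha^{\dim V}=\int_{\hat V}(\nu^*(\alpha|_V))^{\dim V}\geq\vol(\nu^*(\alpha|_V))\geq\int_{\hat V}\hat\theta^{\dim V}>0,\]
which is the contrapositive of the desired inclusion.

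\emph{Second inclusion.} It suffices to prove $\int_Z\alpha^{\dim Z}=0$ for every irreducible component $Z$ of $E_{nK}(\alpha)$, since then $Z\subseteq\mathrm{Null}(\alpha)$ for each such $Z$. I induct on $n$; the case $\dim Z=n$ (hence the base case $n=1$) is immediate because a big class always carries a K\"ahler current with analytic singularities whose singular locus is a \emph{proper} subvariety. So let $p:=\dim Z<n$ and assume, for contradiction, $\int_Z\alpha^p>0$. The goal is now to construct a single K\"ahler current with analytic singularities in $[\alpha]$ whose singular locus does not contain $Z$: since $E_{nK}(\alpha)$ is the intersection of all such singular loci, this contradicts $Z$ being a component. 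The inductive input comes from restriction: for a resolution $\nu:\hat Z\to Z$, the class $\nu^*(\alpha|_Z)$ is nef on $\hat Z$ with $\int_{\hat Z}(\nu^*(\alpha|_Z))^p=\int_Z\alpha^p>0$, hence big; by the inductive hypothesis $E_{nK}(\nu^*(\alpha|_Z))=\mathrm{Null}(\nu^*(\alpha|_Z))$ is a \emph{proper} subvariety of $\hat Z$, so $\hat Z$ carries a K\"ahler current $S$ with analytic singularities in the class $\nu^*(\alpha|_Z)$ whose singular locus avoids a generic point of $\hat Z$, hence of $Z$.

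\emph{The crux: propagating $S$ back to $X$.} This is the step I expect to be the main obstacle. We want a K\"ahler current in $[\alpha]$ on $X$ with locally bounded potential near a generic point of $Z$, as it will then not be singular along $Z$ and yields the contradiction. A positive current on a subvariety cannot in general be extended to the ambient manifold while keeping strict positivity, so one runs the Monge--Amp\`ere ``mass-concentration'' machinery of Section \ref{sect1} instead: starting from a K\"ahler current with analytic singularities in $[\alpha]$ whose singular locus is exactly $E_{nK}(\alpha)$ (which exists by Noetherianity of analytic sets together with a standard gluing of currents with analytic singularities), one passes to a modification $\mu:\ti X\to X$ adapted to this current and to $Z$, writes $\mu^*\alpha$ as a smooth semipositive form plus an effective divisor, and then --- by solving a family of complex Monge--Amp\`ere equations on $\ti X$ and extracting uniform lower bounds as in the proofs of Theorems \ref{effect} and \ref{vol}, with the required cohomological positivity verified against every Gauduchon metric by Lamari's Lemma \ref{lama} --- produces a closed positive current whose push-forward by $\mu$ is a K\"ahler current in $[\alpha]$ that is bounded at a generic point of $Z$. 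The delicate part is keeping the singularities introduced over $Z$ and over the $\mu$-exceptional locus under control; once this is achieved (and after a further Demailly regularization, if needed, to restore analytic singularities), the resulting current contradicts $Z\subseteq E_{nK}(\alpha)$, which closes the induction and completes the proof.
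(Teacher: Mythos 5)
First, note that the paper does not actually prove Theorem \ref{null}: it is quoted from \cite{CT}, and the only remark made here is that the mass-concentration input of \cite{DP} used in \cite{CT} can be replaced by Theorem \ref{effect}. So the comparison below is with the argument of \cite{CT} itself.

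Your skeleton matches that argument: the inclusion $\mathrm{Null}(\alpha)\subseteq E_{nK}(\alpha)$ by restricting a K\"ahler current to $V$ and resolving, and the reverse inclusion by showing that any irreducible $Z$ with $\int_Z\alpha^{\dim Z}>0$ supports, after resolution, a K\"ahler current in the restricted class which must then be propagated back to a K\"ahler current on $X$ not singular along $Z$. Your first inclusion is essentially complete (modulo the small point that invoking $\vol$ and \cite[Lemma 4.2]{Bo} on $\hat V$ presupposes $\hat V$ is K\"ahler or at least in class $\mathcal{C}$; one can avoid this by directly expanding $\int_{\hat V}(\nu^*\alpha)^p$ against $\hat\theta+[\hat E]$ and using nefness to discard the divisor terms).

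The genuine gap is exactly where you flag it: the extension step. As written, your proposal does not prove the second inclusion; it describes the desired output (``a K\"ahler current in $[\alpha]$ that is bounded at a generic point of $Z$'') and defers the construction. Worse, the mechanism you propose for it cannot deliver that output: Lamari's Lemma \ref{lama} and Theorem \ref{effect} produce currents via Hahn--Banach, i.e.\ as distributions with \emph{no control whatsoever on the location of their singularities}, so solving Monge--Amp\`ere equations on a modification $\ti X$ and appealing to \eqref{inequ} can certify the existence of a K\"ahler current in $[\alpha]$ (which you already have, since $[\alpha]$ is big) but cannot certify that some such current has locally bounded potential near a generic point of $Z$. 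The actual argument in \cite{CT} is of a different nature: one extends the local potentials of the current $S$ on $\hat Z$ to a tubular neighborhood of $Z$ in (a modification of) $X$, using the strict positivity $S\geq\ve\,\nu^*(\omega|_Z)$ to absorb the error terms coming off $Z$, and then glues the resulting locally defined K\"ahler current with a global K\"ahler current with analytic singularities in $[\alpha]$ by a regularized maximum, keeping track of the exceptional divisors of the resolution throughout. This local extension-plus-gluing is the technical heart of the proof and occupies most of \cite{CT}; no amount of global Monge--Amp\`ere/Hahn--Banach machinery substitutes for it. A secondary issue: your induction must also rule out zero-dimensional components of $E_{nK}(\alpha)$ (for a point $V=\{x\}$ one has $\int_V\alpha^0=1\neq 0$, so $\mathrm{Null}(\alpha)$ has no isolated points and hence neither may $E_{nK}(\alpha)$); your base case only yields that $E_{nK}(\alpha)$ is a \emph{proper} subvariety, which on a curve is a finite set rather than the empty set, so even the base case silently relies on the same extension phenomenon (a nef class of positive degree on a curve is K\"ahler).
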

The set on the right hand side is called the null locus of the class $[\alpha]$. Note that the existence of a big class on $X$ implies that $X$ is in Fujiki's class $\mathcal{C}$ (i.e. bimeromorphic to a compact K\"ahler manifold), thanks to \cite[Theorem 3.4]{DP}, and so this theorem  follows from \cite[Theorem 1.1]{CT}. The proof in \cite{CT} uses the mass concentration technique of \cite{DP}, but this can now be replaced by Theorem \ref{effect}, and therefore the proof of Theorem \ref{null} can be made independent of the main results of \cite{DP}.

As an application, we discuss the following Nakai-Moishezon criterion for K\"ahler manifolds due to Demailly-P\u{a}un \cite{DP}:
\begin{theorem}\label{dpt}
Let $(X^n,\omega)$ be a compact K\"ahler manifold, and $\alpha$ a closed real $(1,1)$ form whose cohomology class $[\alpha]$ is nef.
Then $[\alpha]$ is a K\"ahler class iff $\int_V\alpha^{\dim V}>0$ for all irreducible analytic subvarieties $V\subset X$.
\end{theorem}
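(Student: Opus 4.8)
The plan is to deduce Theorem~\ref{dpt} from Theorem~\ref{effect} and Theorem~\ref{null}, with no further hard analysis. The implication ``K\"ahler $\Rightarrow$ positive intersection numbers'' is elementary: if $[\alpha]$ is represented by a K\"ahler form $\omega'$, then for any irreducible analytic subvariety $V\subseteq X$ of dimension $p$ we have $\int_V\alpha^p=\int_V(\omega')^p$, since the integral of a closed form over the cycle $V$ depends only on its cohomology class, and $\int_V(\omega')^p=\int_{V_{\mathrm{reg}}}(\omega')^p$ is the volume of the smooth locus of $V$ in the induced metric, hence strictly positive.

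For the converse, assume $[\alpha]$ is nef and $\int_V\alpha^{\dim V}>0$ for every irreducible $V\subseteq X$. Taking $V=X$ gives $\int_X\alpha^n>0$, so Theorem~\ref{effect} produces a closed positive current $T_0\in[\alpha]$ with $T_0\geq c\,\omega$ for some $c>0$; this is a K\"ahler current, hence $[\alpha]$ is big. Since $X$ is K\"ahler and $[\alpha]$ is nef and big, Theorem~\ref{null} applies and gives $E_{nK}(\alpha)=\bigcup_{\int_V\alpha^{\dim V}=0}V$; by hypothesis this union is empty, so $E_{nK}(\alpha)=\emptyset$, and it remains to show that this forces $[\alpha]$ to be K\"ahler.

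For that I would first use compactness. By definition $E_{nK}(\alpha)$ is the intersection of the singular loci of all K\"ahler currents with analytic singularities in $[\alpha]$, so if it is empty the open sets $X\setminus\mathrm{Sing}(T)$ cover $X$, and finitely many K\"ahler currents $T_j=\alpha+\ddbar\vp_j$ ($j=1,\dots,m$) with analytic singularities, and with $T_j\geq\ve\omega$ after replacing each lower bound by their common minimum $\ve>0$, already satisfy $\bigcap_j\mathrm{Sing}(T_j)=\emptyset$. Each $\vp_j$ is bounded above on $X$ and equals $-\infty$ exactly along $\mathrm{Sing}(T_j)$. Since $\alpha-\ve\omega$ is a fixed smooth closed $(1,1)$-form and each $\vp_j$ is $(\alpha-\ve\omega)$-plurisubharmonic, a regularized maximum $\vp$ of $\vp_1,\dots,\vp_m$ is again $(\alpha-\ve\omega)$-plurisubharmonic, so $T:=\alpha+\ddbar\vp\geq\ve\omega$ is a K\"ahler current; moreover $\vp$ is bounded, since near every point some $\vp_{j_0}$ is smooth (the point lies outside $\mathrm{Sing}(T_{j_0})$) and $\vp\geq\vp_{j_0}$ there, while $\vp$ is bounded above because each $\vp_j$ is.

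The last step upgrades the K\"ahler current $T$, which now has bounded potential, to an honest K\"ahler form. I would apply Demailly's regularization \cite{Dem92} to $T$, obtaining K\"ahler currents $T_k=\alpha+\ddbar\vp_k$ with analytic singularities and $T_k\geq(\ve-\delta_k)\omega$ with $\delta_k\to 0$, where $\vp_k$ decreases to $\vp$; boundedness of $\vp$ then makes the $\vp_k$ uniformly bounded. But a quasi-plurisubharmonic function with analytic singularities that is bounded is necessarily smooth: locally it equals $\delta\log(\sum_i|f_i|^2)+\psi$ with $\psi$ smooth, and boundedness forces the common zero set of the $f_i$ to be empty, so $\sum_i|f_i|^2$ is bounded below. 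Hence each $\vp_k$ is smooth and, for $k$ large, $T_k=\alpha+\ddbar\vp_k$ is a smooth K\"ahler form in $[\alpha]$. The step I expect to need the most care is this last circle of ideas, namely packaging the statement $E_{nK}(\alpha)=\emptyset$, via a regularized maximum and Demailly's regularization, into an actual smooth K\"ahler representative; everything else is either elementary or already supplied by Theorems~\ref{effect} and \ref{null}.
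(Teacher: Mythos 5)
Your proposal is correct, and its skeleton coincides with the paper's: the forward direction is elementary, and for the converse you use Theorem \ref{effect} (with $V=X$) to get bigness and then Theorem \ref{null} to conclude $E_{nK}(\alpha)=\emptyset$. The only divergence is the final step. The paper disposes of ``$E_{nK}(\alpha)=\emptyset$ implies $[\alpha]$ K\"ahler'' by citing Boucksom's observation \cite[Theorem 3.17]{Bo2} that the class contains a K\"ahler current with analytic singularities whose singular set is exactly $E_{nK}(\alpha)$, which is then automatically smooth. You instead reprove the special case you need: compactness of $X$ extracts finitely many K\"ahler currents $T_j\geq\ve\omega$ with analytic singularities and $\bigcap_j\mathrm{Sing}(T_j)=\emptyset$, a regularized maximum of their potentials yields a K\"ahler current $T\geq\ve\omega$ with bounded potential, and Demailly regularization (with the decreasing property $\vp_k\geq\vp$, which is what gives the uniform lower bound, plus the fact that a function with analytic singularities that is bounded below has empty singular ideal locus and is therefore smooth) produces a smooth K\"ahler representative. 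This is sound and is essentially the content of the cited result of Boucksom, so your argument buys self-containedness at the cost of a little extra work; the paper's version is shorter but leans on an external theorem. All the individual steps you flag (quasi-plurisubharmonicity of the regularized maximum, boundedness of $\vp$ near points where some $\vp_{j_0}$ is smooth, smoothness of bounded functions with analytic singularities) check out.
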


\begin{proof}
Clearly if $[\alpha]$ is K\"ahler then  $\int_V\alpha^{\dim V}>0$ for all such $V$. For the other implication, taking $V=X$ we see that $\int_X\alpha^n>0$, and so Theorem \ref{effect} implies that the class $[\alpha]$ is big, i.e. it contains a K\"ahler current.
Applying Theorem \ref{null} we obtain
$$E_{nK}(\alpha)=\bigcup_{\int_V\alpha^{\dim V}=0}V,$$
and by assumption we have that the null locus of $[\alpha]$ is empty, and therefore so is $E_{nK}(\alpha)$.
But Boucksom \cite[Theorem 3.17]{Bo2} observed that we can find a K\"ahler current $T$ with analytic singularities in the class $[\alpha]$ which is singular precisely along $E_{nK}(\alpha)$, and so the K\"ahler current $T$ is in fact a smooth K\"ahler metric.
\end{proof}

We also give an application of Theorem \ref{null} to Seshadri constants. These were introduced by Demailly \cite{De3} as a way to measure local positivity properties of line bundles, and we refer the reader to \cite{De3, Laz} for more properties of these invariants.

Let $X$ be a compact K\"ahler manifold and $\alpha$ a closed real $(1,1)$ form whose cohomology class $[\alpha]$ is nef. Given any $x\in X$
we let the Seshadri constant of $\alpha$ at $x$ be
$$\ve(\alpha,x)=\sup\{ \lambda\geq 0\ |\ \pi^*[\alpha]-\lambda [E]\ {\rm nef}\},$$
where $\pi:\ti{X}\to X$ is the blowup of $X$ at $x$, and $E$ is the exceptional divisor. It is easy to see that $\ve(\cdot, x)$ is continuous on the nef cone.

The following result is an application of Theorem \ref{null}:
\begin{theorem}\label{ses}
Let $X$ be a compact K\"ahler manifold, $\alpha$ a closed real $(1,1)$ form whose cohomology class $[\alpha]$ is nef, and $x$ be a point in $X$. Then
$$\ve(\alpha,x)=0\quad\Longleftrightarrow\quad x\in E_{nK}(\alpha).$$
\end{theorem}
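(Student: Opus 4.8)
The plan is to prove the two implications separately, in both cases using Theorem \ref{null} to translate between the non-K\"ahler locus $E_{nK}(\alpha)$ and the null locus of $[\alpha]$. Let $\pi:\ti{X}\to X$ be the blowup of $X$ at $x$ with exceptional divisor $E$, and recall that $\pi^*[\alpha]$ is a nef class on $\ti{X}$ since pullbacks of nef classes are nef.

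For the implication $\ve(\alpha,x)=0 \Rightarrow x\in E_{nK}(\alpha)$, I would argue by contraposition: suppose $x\notin E_{nK}(\alpha)$. By Theorem \ref{null}, $x$ does not lie on any irreducible analytic subvariety $V\subset X$ with $\int_V\alpha^{\dim V}=0$. I claim this forces the pulled-back class $\pi^*[\alpha]$ to have positive degree on every irreducible subvariety $W\subset\ti{X}$ not contained in $E$ but meeting a neighborhood of $E$; more precisely, for $\lambda>0$ small, $\pi^*[\alpha]-\lambda[E]$ should be nef. The natural route is to apply Theorem \ref{dpt} on $\ti{X}$ to the class $\pi^*[\alpha]-\lambda[E]$ for $\lambda>0$ small: one must check that $\int_W(\pi^*[\alpha]-\lambda[E])^{\dim W}>0$ for all irreducible $W\subset\ti{X}$. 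For $W\subset E$ this is a computation on projective space (the class restricts to $(\pi|_E)^*[\alpha]|_x - \lambda\mathcal{O}_E(-1)\cdot(\text{something})$, which is a positive multiple of the hyperplane class for small $\lambda$). For $W\not\subset E$, let $V=\pi(W)$; then $\int_W(\pi^*\alpha)^{\dim W}=\int_V\alpha^{\dim V}>0$ by assumption, and subtracting a small multiple of $[E]$ perturbs this positively since the correction terms are $O(\lambda)$. Hence $\pi^*[\alpha]-\lambda[E]$ is K\"ahler, so nef, giving $\ve(\alpha,x)\geq\lambda>0$.

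For the converse $x\in E_{nK}(\alpha) \Rightarrow \ve(\alpha,x)=0$, again by contraposition: if $\ve(\alpha,x)=\lambda>0$, then $\pi^*[\alpha]-\frac{\lambda}{2}[E]$ is a nef class on $\ti{X}$, and it is also big because $\int_{\ti{X}}(\pi^*\alpha-\frac{\lambda}{2}[E])^n = \int_X\alpha^n - (\frac{\lambda}{2})^n\int_{\ti{X}}[E]^n$, and since $[E]^n=(-1)^{n-1}$ up to sign conventions this is $\int_X\alpha^n + (\text{positive})>0$ — so by Theorem \ref{effect} on $\ti{X}$ (or directly) the class is big. Now $E_{nK}$ of a nef and big class equals its null locus by Theorem \ref{null}; since $\pi^*[\alpha]-\frac{\lambda}{2}[E]$ restricted to any $V\subset\ti X$ not inside $E$ pushes down and stays positive, and on curves inside $E$ it is strictly positive, its null locus is empty, so $E_{nK}(\pi^*\alpha-\frac{\lambda}{2}[E])=\emptyset$. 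One then pushes a K\"ahler current $T$ with analytic singularities supported on this (empty) locus down by $\pi$; since $T$ is in fact smooth off $E$ and $\pi$ is an isomorphism there, $\pi_*T + \frac{\lambda}{2}[E] \mapsto$ a K\"ahler current on $X$ that is smooth near $x$, so $x\notin E_{nK}(\alpha)$.

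The main obstacle I anticipate is the bookkeeping in the contrapositive of the first implication: verifying that $\int_W(\pi^*\alpha-\lambda[E])^{\dim W}>0$ uniformly for \emph{all} subvarieties $W$, including those passing through $x$ with high multiplicity, using only the hypothesis that $\int_V\alpha^{\dim V}>0$ for the \emph{finitely many relevant} $V$ through $x$. One has to be careful that there is a single $\lambda>0$ that works, which requires either a compactness/lower-semicontinuity argument on the set of subvarieties through $x$, or — more cleanly — invoking the fact (from the theory of Seshadri constants, or from Theorem \ref{null} applied on $\ti X$) that the relevant obstruction is captured by the null locus of $\pi^*[\alpha]$, which sits over $E_{nK}(\alpha)$. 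The cleanest writeup therefore reduces everything to two applications of Theorem \ref{null}, on $X$ and on $\ti X$, together with the elementary compatibility $\pi_*E_{nK}(\pi^*\alpha-\lambda[E])\cup\{x\}\supseteq$ or $=E_{nK}(\alpha)$ for small $\lambda$.
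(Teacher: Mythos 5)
There is a genuine gap, and it sits at the heart of both implications: nothing in your argument actually produces the nefness of $\pi^*[\alpha]-\lambda[E]$ when $x\notin E_{nK}(\alpha)$. Your plan is to apply Theorem \ref{dpt} on $\ti{X}$ to conclude that $\pi^*[\alpha]-\lambda[E]$ is K\"ahler, but Theorem \ref{dpt} takes nefness as a \emph{hypothesis}, and nefness of $\pi^*[\alpha]-\lambda[E]$ is precisely the statement $\ve(\alpha,x)\geq\lambda$ you are trying to prove; the argument is circular. Worse, the intended conclusion is false: the hypothesis $x\notin E_{nK}(\alpha)$ only controls subvarieties through $x$, so if $[\alpha]$ is nef and big but not K\"ahler there are irreducible $V$ with $\int_V\alpha^{\dim V}=0$ away from $x$; their transforms $W$ are disjoint from $E$ and satisfy $\int_W(\pi^*\alpha-\lambda[E])^{\dim W}=0$, so $\pi^*[\alpha]-\lambda[E]$ is never K\"ahler and its null locus is never empty. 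The same false claim (``its null locus is empty'') undermines your argument for the converse, where there is also a sign slip: $\int_{\ti{X}}(\pi^*\alpha-\tfrac{\lambda}{2}[E])^n=\int_X\alpha^n-(\lambda/2)^n$, which is positive only because nefness of $\pi^*[\alpha]-\lambda[E]$ forces $\int_X\alpha^n\geq\lambda^n$.

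The missing idea for the forward implication is a direct gluing construction, not an appeal to the Nakai criterion. From $x\notin E_{nK}(\alpha)$ take a K\"ahler current $T=\alpha+\ddbar\psi\geq\beta\omega$ with analytic singularities, smooth near $x$; using the regularized maximum $\widetilde{\max}(\rho_\ve-C_\ve,\psi)$ of $\psi$ with the quasi-nef potentials $\rho_\ve$ of $[\alpha]$, one builds smooth representatives of $[\alpha]$ that are $\geq\beta\omega$ on a fixed neighborhood of $x$ and $\geq-\ve\omega$ everywhere; subtracting $\beta\gamma\eta$, where $\eta$ represents $[E]$, is supported near $E$, and satisfies $\pi^*\omega-\gamma\eta>0$, exhibits $\pi^*[\alpha]-\beta\gamma[E]$ as nef, whence $\ve(\alpha,x)\geq\beta\gamma>0$. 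For the reverse implication the direct route is much shorter than your contrapositive: $x\in E_{nK}(\alpha)$ gives (by Theorem \ref{null}, or trivially if $[\alpha]$ is not big) an irreducible $V\ni x$ with $\int_V\alpha^{\dim V}=0$, and then $\int_{\ti{V}}(\pi^*\alpha-\lambda[E])^{\dim V}=-\lambda^{\dim V}\,\mathrm{mult}_xV<0$ on the proper transform, so $\pi^*[\alpha]-\lambda[E]$ fails to be nef for every $\lambda>0$.
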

\begin{proof}
If $x\in E_{nK}(\alpha)$ then by Theorem \ref{null} there is an irreducible $k$-dimensional analytic subvariety $V$ which passes through $x$ and with
$$\int_V\alpha^k=0.$$
If $\ti{V}$ denotes the proper transform of $V$ under $\pi$, then for any $\lambda>0$ we have
$$\int_{\ti{V}}(\pi^*\alpha-\lambda[E])^k=(-1)^k\lambda^k\int_{\ti{V}}[E]^k=-\lambda^k\mathrm{mult}_xV<0,$$
where the last equality is \cite[Lemma 5.1.10]{Laz}, and so $\pi^*[\alpha]-\lambda [E]$ is not nef, thus showing that $\ve(\alpha,x)=0$.

On the other hand if $x\not\in E_{nK}(\alpha)$ then there is a K\"ahler current $T\in[\alpha]$ with analytic singularities which is smooth near $x$, and satisfies $T=\alpha+\ddbar\psi\geq\beta\omega$ on $X$ for some $\beta>0$. Also, since $[\alpha]$ is nef, for every $\ve>0$ there is a smooth function $\rho_\ve$ such that
$\alpha+\ddbar\rho_\ve\geq -\ve\omega$ on $X$. Now since $[E]|_{E}\cong\mathcal{O}_{\mathbb{CP}^{n-1}}(-1)$, there is a smooth closed real $(1,1)$ form $\eta$ on $\ti{X}$ which is cohomologous to $[E]$, is supported on a neighborhood $U$ of $E$, and such that $\pi^*\omega-\gamma\eta$ is a K\"ahler metric on $\ti{X}$ for some small $\gamma>0$ (see e.g. \cite[Lemma 3.5]{DP}).
If $\widetilde{\max}$ denotes a regularized maximum function (see \cite[I.5.18]{Demb}), then
for every $\ve>0$ we can choose a large constant $C_\ve>0$ such that
$$\ti{\rho}_\ve:=\widetilde{\max}(\rho_\ve-C_\ve,\psi),$$
is a smooth function on $X$ which agrees with $\psi$ in a neighborhood of $x$ which contains $\pi(U)$. We have $\alpha+\ddbar\ti{\rho}_\ve\geq -\ve\omega$ on $X$ and $\alpha+\ddbar\ti{\rho}_\ve\geq \beta\omega$ on $\pi(U)$. Therefore on $U$ we have
$$\pi^*\alpha-\beta\gamma\eta+\ddbar(\pi^*\ti{\rho}_\ve)>0,$$
while on $\ti{X}\backslash U$ we have
$$\pi^*\alpha-\beta\gamma\eta+\ddbar(\pi^*\ti{\rho}_\ve)=\pi^*\alpha+\ddbar(\pi^*\ti{\rho}_\ve)\geq-\ve\pi^*\omega\geq -C\ve\ti{\omega},$$
where $\ti{\omega}$ is a fixed K\"ahler metric on $\ti{X}$, with $\pi^*\omega\leq C\ti{\omega}$.
This shows that $\pi^*[\alpha]-\beta\gamma[E]$ is nef, and so $\ve(\alpha,x)\geq \gamma\beta>0$.
\end{proof}

In fact, we can sharpen this result further.
The following is the analog of \cite[Proposition 5.1.9]{Laz} in our transcendental situation.
\begin{theorem}\label{subvar}
With the same assumptions as in Theorem \ref{ses}, we have
\begin{equation}\label{goall}
\ve(\alpha,x)=\inf_{V\ni x}\left(\frac{\int_{V}\alpha^{\dim V}}{\mathrm{mult}_x V}\right)^{\frac{1}{\dim V}},
\end{equation}
where the infimum is over all irreducible analytic subvarieties $V$ containing $x$. In particular, $\ve(\alpha,x)\leq \vol(\alpha)^{\frac{1}{n}}$. Lastly, if $[\alpha]$ is K\"ahler then the infimum in \eqref{goall} is actually a minimum.
\end{theorem}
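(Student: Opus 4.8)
The plan is to prove \eqref{goall} first for K\"ahler classes $[\alpha]$, and then to reduce the general nef case to it by perturbation. Throughout, $\pi:\ti X\to X$ denotes the blowup of $X$ at $x$, $E\cong\mathbb{CP}^{n-1}$ its exceptional divisor, and $\ti V$ the strict transform of an irreducible subvariety $V\subseteq X$. The computational input is the elementary identity $\pi^*[\alpha]\cdot[E]=0$ in cohomology (since $\pi^*\alpha$ restricts to $0$ on $E$), which makes all cross terms in $(\pi^*\alpha-\lambda[E])^{k}$ vanish and gives, for each irreducible $V\ni x$ and each $\lambda\ge 0$,
$$\int_{\ti V}(\pi^*\alpha-\lambda[E])^{\dim V}=\int_V\alpha^{\dim V}-\lambda^{\dim V}\,\mathrm{mult}_xV$$
(exactly as in the computation in the proof of Theorem \ref{ses}, cf. \cite[Lemma 5.1.10]{Laz}), while for an irreducible $W\subseteq E$ of dimension $k\ge 1$ one gets $\int_W(\pi^*\alpha-\lambda[E])^{k}=\lambda^{k}\deg_E W\ge 0$ using $[E]|_E\cong\mathcal{O}_{\mathbb{CP}^{n-1}}(-1)$. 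Since $\pi^*[\alpha]-\lambda[E]$ is nef for $0\le\lambda<\ve(\alpha,x)$, its top self-intersection on $\ti V$ is $\ge 0$, so letting $\lambda\to\ve(\alpha,x)$ yields $\ve(\alpha,x)\le\big(\int_V\alpha^{\dim V}/\mathrm{mult}_xV\big)^{1/\dim V}$ for every $V\ni x$; this is ``$\le$'' in \eqref{goall}. Taking $V=X$, together with Corollary \ref{nefcase} when $\int_X\alpha^n>0$ and Theorem \ref{vol} when $\int_X\alpha^n=0$, then gives $\ve(\alpha,x)\le\vol(\alpha)^{1/n}$.

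Now assume $[\alpha]$ is K\"ahler, and write $\delta$ for the right-hand side of \eqref{goall} and $\ve_0=\ve(\alpha,x)$. Since $E_{nK}(\alpha)=\emptyset$, Theorem \ref{ses} gives $\ve_0>0$, and by convexity and closedness of the nef cone $\{\lambda\ge 0:\pi^*[\alpha]-\lambda[E]\textrm{ nef}\}=[0,\ve_0]$. Suppose, for contradiction, that $\ve_0<\delta$. Then $\gamma_0:=\pi^*[\alpha]-\ve_0[E]$ is nef with $\int_{\ti X}\gamma_0^n=\int_X\alpha^n-\ve_0^n>0$, hence big by Theorem \ref{effect}. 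Every irreducible subvariety of $\ti X$ is either contained in $E$ or is the strict transform $\ti V$ of the irreducible subvariety $V=\pi(W)$ of $X$, of the same dimension. Going through these cases with the two displayed formulas, using that $[\alpha]$ K\"ahler forces $\int_V\alpha^{\dim V}>0$ for every irreducible $V\subseteq X$, that $\ve_0>0$, and that $\ve_0<\delta$ (the last point entering exactly when $W=\ti V$ with $x\in V$), one checks that $\int_W\gamma_0^{\dim W}>0$ for \emph{every} irreducible $W\subseteq\ti X$. Thus the null locus of $\gamma_0$ is empty, so by Theorem \ref{null} and Boucksom's observation \cite[Theorem 3.17]{Bo2} (used as in the proof of Theorem \ref{dpt}) the class $\gamma_0$ is K\"ahler, contradicting the maximality of $\ve_0$. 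Hence $\ve_0=\delta$. For the ``minimum'' assertion: if $\delta=(\int_X\alpha^n)^{1/n}$ then $V=X$ attains the infimum; otherwise $\gamma_0:=\pi^*[\alpha]-\delta[E]$ (recall $\ve_0=\delta$) has $\int_{\ti X}\gamma_0^n=\int_X\alpha^n-\delta^n>0$, so it is nef and big but not K\"ahler (the K\"ahler cone being open and $\ve_0=\delta$ maximal), hence its null locus is nonempty by Theorem \ref{null}; by the same case analysis the only irreducible $W\subseteq\ti X$ with $\int_W\gamma_0^{\dim W}=0$ are strict transforms $\ti V$ with $x\in V$ realizing $\delta=\big(\int_V\alpha^{\dim V}/\mathrm{mult}_xV\big)^{1/\dim V}$, so the infimum is attained.

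For a general nef class $[\alpha]$, apply the K\"ahler case to $[\alpha]+\eta[\omega]$ for each $\eta>0$: since all mixed intersections $\int_V\alpha^{\dim V-j}\wedge\omega^{j}$ ($0\le j\le\dim V$) are nonnegative by nefness of $[\alpha]$, one has $\int_V(\alpha+\eta\omega)^{\dim V}\ge\int_V\alpha^{\dim V}$ for every irreducible $V$, whence
$$\ve(\alpha+\eta\omega,x)=\inf_{V\ni x}\left(\frac{\int_V(\alpha+\eta\omega)^{\dim V}}{\mathrm{mult}_xV}\right)^{\!\!1/\dim V}\ \ge\ \inf_{V\ni x}\left(\frac{\int_V\alpha^{\dim V}}{\mathrm{mult}_xV}\right)^{\!\!1/\dim V}.$$
Letting $\eta\to 0$ and using the continuity of $\ve(\cdot,x)$ on the nef cone gives ``$\ge$'' in \eqref{goall}, which with ``$\le$'' completes the proof. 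The step I expect to be the main obstacle is the K\"ahler-case reverse inequality: the key point is that $\pi^*[\alpha]-\ve_0[E]$ is by construction \emph{not} K\"ahler and yet, under the hypothesis $\ve_0<\delta\le(\int_X\alpha^n)^{1/n}$, is both nef and big --- precisely the configuration in which the Nakai--Moishezon-type machinery of Theorems \ref{null} and \ref{dpt} can be run on $\ti X$; carrying this out cleanly requires the classification of irreducible subvarieties of the blowup and the two intersection-number identities above.
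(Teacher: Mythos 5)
Your proposal is correct and follows essentially the same route as the paper: the upper bound via the intersection computation on proper transforms, the K\"ahler case via the transcendental Nakai--Moishezon criterion applied to the boundary nef class $\pi^*[\alpha]-\ve(\alpha,x)[E]$ on the blowup (your contradiction argument through Theorem \ref{null} and Boucksom's observation is just the contrapositive packaging of the paper's direct appeal to Theorem \ref{dpt}), and the general nef case by perturbing to $[\alpha]+\eta[\omega]$ and using continuity of the Seshadri constant together with monotonicity of the intersection numbers. The only cosmetic difference is that the paper extracts the minimizing subvariety directly from the Demailly--P\u{a}un criterion, obtaining the equality and the attainment of the minimum in one step rather than two.
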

\begin{proof}
Let $V$ be any irreducible subvariety through $x$, let $\ti{V}$ be the proper transform of $V$, and let $k=\dim V$. If $\pi^*[\alpha]-\lambda[E]$ is nef then
$$0\leq \int_{\ti{V}}(\pi^*\alpha-\lambda[E])^{k}=\int_V\alpha^k+(-1)^k\lambda^k\int_{\ti{V}}[E]^k=\int_V\alpha^k-\lambda^k\mathrm{mult}_x V.$$
This proves that
\begin{equation}\label{str2}
\ve(\alpha,x)\leq\inf_{V\ni x}\left(\frac{\int_{V}\alpha^{\dim V}}{\mathrm{mult}_x V}\right)^{\frac{1}{\dim V}},
\end{equation}
for all nef classes $[\alpha]$. Next, we show that \eqref{goall} holds
%\begin{equation}\label{str3}
%\ve(\alpha,x)\geq\inf_{V\ni x}\left(\frac{\int_{V}\alpha^{\dim V}}{\mathrm{mult}_x V}\right)^{\frac{1}{\dim V}},
%\end{equation}
when $[\alpha]$ is K\"ahler. In this case we have $\ve(\alpha,x)>0$ and the class $\pi^*[\alpha]-\ve(\alpha,x)[E]$ is nef but not K\"ahler, so by Theorem \ref{dpt} there is an irreducible analytic subvariety $\ti{V}\subset \ti{X}$ with
\begin{equation}\label{str}
0=\int_{\ti{V}}(\pi^*\alpha-\ve(\alpha,x)[E])^{k}.
\end{equation}
If $\ti{V}$ were disjoint from $E$ then $V=\pi(\ti{V})$ would be an irreducible $k$-dimensional subvariety of $X$ and we would have
$$0=\int_{\ti{V}}(\pi^*\alpha)^k=\int_V\alpha^k,$$
a contradiction since $[\alpha]$ is K\"ahler.

If we had $\ti{V}\subset E$ then we would have $-[E]|_{\ti{V}}\cong \mathcal{O}_{\mathbb{CP}^{n-1}}(1)|_{\ti{V}}$ which is ample, and so
$$0=\ve(\alpha,x)^k\int_{\ti{V}}(-[E])^{k}>0,$$
a contradiction.

Therefore we must have that $\ti{V}$ intersects $E$ but is not contained in it, and so $V=\pi(\ti{V})$ is an irreducible $k$-dimensional subvariety of $X$ which passes through $x$ and \eqref{str} gives
$$\ve(\alpha,x)=\left(\frac{\int_{V}\alpha^{k}}{\mathrm{mult}_x V}\right)^{\frac{1}{k}},$$
which together with \eqref{str2} proves \eqref{goall} when $[\alpha]$ is K\"ahler, with the infimum being in fact a minimum.

Next, for a general nef class $[\alpha]$ and any $x\in X$, the class $[\alpha+\ve\omega]$ is K\"ahler for every $\ve>0$. Using the continuity of the Seshadri constants, together with \eqref{goall} for K\"ahler classes, we obtain
\[\begin{split}
\ve(\alpha,x)&=\lim_{\ve\downarrow 0}\ve(\alpha+\ve\omega,x)=\lim_{\ve\downarrow 0}\min_{V\ni x}\left(\frac{\int_{V}(\alpha+\ve\omega)^{\dim V}}{\mathrm{mult}_x V}\right)^{\frac{1}{\dim V}}\\
&\geq \inf_{V\ni x}\left(\frac{\int_{V}\alpha^{\dim V}}{\mathrm{mult}_x V}\right)^{\frac{1}{\dim V}},\end{split}\]
which combined with \eqref{str2} proved \eqref{goall}.
Lastly, the bound $\ve(\alpha,x)\leq \vol(\alpha)^{\frac{1}{n}}$ follows from Corollary \ref{nefcase}.
\end{proof}

\section{The non-K\"ahler case}\label{sect2}

In this section we consider the general case of compact complex manifolds which are not assumed to be K\"ahler.

In his seminal paper \cite{De}, Demailly used a mass concentration technique for a degenerating family of complex Monge-Amp\`ere equations to produce $\alpha$-plurisubharmonic functions in a nef and big class $[\alpha]$ (on a compact K\"ahler manifold) which have nontrivial Lelong numbers (i.e. logarithmic poles singularities) at finitely many given points. The precise statement that he
proved is exactly the same as our Theorem \ref{main2} but with the additional assumption that $X$ be K\"ahler.
This was later extended by Weinkove and the author \cite{TW1} to certain non-K\"ahler manifolds. More precisely, we proved the statement in Theorem \ref{main2}, replacing the assumption that $[\alpha]$ is big with the weaker assumption that $\int_X\alpha^n>0$, but we also had to assume that either $n\leq 3$ or that $X$ is Moishezon and the class $[\alpha]$ is rational.
More recently Nguyen \cite{Ng} proved Theorem \ref{main2} assuming that $\int_X\alpha^n>0$ and $\alpha$ is semipositive definite (which is in general stronger than assuming that $[\alpha]$ is nef).

Our new observation is that if we assume that $[\alpha]$ is big then a weak transcendental Morse inequality of Demailly \cite[Theorem 1.18(a)]{De2} can be brought to bear. More precisely, we use the following result, which is reminescent of Y.T. Siu's ``calculus inequalities'' \cite{Si}, and which relies crucially on the regularity results for envelopes proved in \cite{BD}.
\begin{theorem}[Demailly \cite{De2}]\label{con}
Let $X$ be a compact complex $n$-manifold with a closed real $(1,1)$-form $\alpha$ such that its class $[\alpha]\in H^{1,1}_{\mathrm{BC}}(X,\mathbb{R})$ which is nef and big, and let
$\beta=\alpha+\ddbar f$ be any smooth representative of $[\alpha]$. Then
$$\int_X \alpha^n=\int_X\beta^n\leq\int_{X(\beta,0)}\beta^n,$$
where $X(\beta,0)$ is the subset of $X$ where $\beta$ is semipositive definite.
\end{theorem}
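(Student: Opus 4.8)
The plan is to study the extremal envelope of the zero obstacle,
$$\phi_\beta:=\Big(\sup\big\{\,u\ \beta\text{-plurisubharmonic on }X\ :\ u\le 0\,\big\}\Big)^{*},$$
which is well-defined and $\beta$-plurisubharmonic with $\phi_\beta\le 0$ because $[\alpha]$ is nef (hence pseudoeffective), and which has minimal singularities among such functions; note also that $\sup_X\phi_\beta=0$, so the contact set $D:=\{\phi_\beta=0\}$ is precisely where $\phi_\beta$ attains its maximum. The one ingredient I would take as a black box is the regularity theorem of Berman--Demailly \cite{BD}: since $[\alpha]$ is big, $\phi_\beta$ is of class $C^{1,1}$ (in particular $\ddbar\phi_\beta\in L^\infty_{\mathrm{loc}}$) on the ample locus $\Omega$ of $[\alpha]$, which is the complement in $X$ of a proper analytic subset, and moreover the ``orthogonality relation'' holds: the non-pluripolar Monge--Amp\`ere measure $\langle(\beta+\ddbar\phi_\beta)^n\rangle$ is carried by $D$.

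Granting this, the first step is to invoke the standard real-variable fact that a $W^{2,\infty}_{\mathrm{loc}}$ function has vanishing (real, hence complex) Hessian almost everywhere on each of its level sets, which gives $\ddbar\phi_\beta=0$ a.e. on $D\cap\Omega$. Consequently $\beta+\ddbar\phi_\beta=\beta$ a.e. on $D\cap\Omega$; since $\beta+\ddbar\phi_\beta$ is a positive current which on $\Omega$ has $L^\infty$ coefficients, it is $\ge 0$ a.e. there, so $\beta\ge 0$ a.e. on $D\cap\Omega$. In particular $D$ is contained, up to a Lebesgue-null set, in the semipositivity locus $X(\beta,0)$. As the non-pluripolar product puts no mass on the pluripolar set $X\setminus\Omega$ and coincides with the honest product where $\phi_\beta$ is $C^{1,1}$, this yields
$$\int_X\big\langle(\beta+\ddbar\phi_\beta)^n\big\rangle=\int_{D\cap\Omega}(\beta+\ddbar\phi_\beta)^n=\int_{D\cap\Omega}\beta^n\le\int_{X(\beta,0)}\beta^n.$$

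It then remains to identify the left-hand side with $\int_X\alpha^n$. Because $\phi_\beta$ has minimal singularities, the total mass $\int_X\langle(\beta+\ddbar\phi_\beta)^n\rangle$ equals $\vol([\alpha])$ (Boucksom \cite{Bo}); and since $[\alpha]$ is big, $X$ lies in Fujiki's class $\mathcal{C}$, so there is a modification $\mu\colon\ti X\to X$ with $\ti X$ compact K\"ahler. The class $\mu^*[\alpha]$ is then nef and big on $\ti X$ with $\int_{\ti X}(\mu^*\alpha)^n=\int_X\alpha^n$, so Corollary \ref{nefcase} gives $\vol(\mu^*[\alpha])=\int_X\alpha^n$, and since $\vol$ is a bimeromorphic invariant we conclude $\vol([\alpha])=\int_X\alpha^n$. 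Combining with the displayed inequality completes the argument.

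The main obstacle is entirely contained in the Berman--Demailly input: the $C^{1,1}$ regularity of $\phi_\beta$ on the ample locus together with the orthogonality relation for the Monge--Amp\`ere measure of the envelope. Everything else is bookkeeping with non-pluripolar products, the elementary Hessian-on-level-sets lemma, and the facts about volumes of nef and big classes already established above.
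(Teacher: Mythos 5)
Your overall strategy is sound, but it is really a proof of the black box rather than an alternative to it: the inequality $\vol(\alpha)\le\int_{X(\beta,0)}\beta^n$ that you derive from the envelope $\phi_\beta$ is precisely Demailly's weak transcendental Morse inequality \cite[Theorem 1.18(a)]{De2}, which the paper simply cites, and whose proof is exactly your argument (Berman--Demailly regularity on the ample locus, vanishing of the Hessian a.e.\ on the contact set, concentration of the Monge--Amp\`ere measure there). The identification $\vol(\alpha)=\int_X\alpha^n$ via the K\"ahler modification and Corollary \ref{nefcase} is the same as in the paper. So, modulo the point below, the content is correct and the route is essentially the paper's with the citation unpacked.

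The one genuine gap is where the K\"ahler hypothesis enters. In Theorem \ref{con} the manifold $X$ is only a compact complex manifold (it lies in Fujiki's class $\mathcal{C}$ because $[\alpha]$ is big, but need not be K\"ahler), whereas every ingredient you take as given --- the Berman--Demailly regularity of $\phi_\beta$ on the ample locus, the orthogonality relation, the theory of non-pluripolar products and the identity $\int_X\langle(\beta+\ddbar\phi_\beta)^n\rangle=\vol(\alpha)$ for currents with minimal singularities --- is established in the literature for compact \emph{K\"ahler} manifolds. Applying these directly on $X$ is therefore not justified as written. The fix is the same device you already use to compute the volume: pass first to a K\"ahler modification $\mu:\ti{X}\to X$, run the entire envelope argument for the nef and big class $[\mu^*\alpha]$ with the smooth representative $\mu^*\beta$, obtaining $\int_X\alpha^n=\vol(\mu^*\alpha)\le\int_{\ti{X}(\mu^*\beta,0)}(\mu^*\beta)^n$, and then observe that $(\mu^*\beta)^n$ vanishes on the exceptional locus and $\ti{X}(\mu^*\beta,0)\supseteq\mu^{-1}(X(\beta,0))$, so this last integral equals $\int_{X(\beta,0)}\beta^n$. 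With that reordering your argument closes. (A minor cosmetic point: \cite{BD} gives local boundedness of the Laplacian of $\phi_\beta$, hence $W^{2,p}_{\mathrm{loc}}$ for all finite $p$, rather than genuine $C^{1,1}$; this is all your Hessian-on-level-sets lemma needs.)
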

\begin{proof}
Since $[\alpha]$ is big, we have that the manifold $X$ is in class $\mathcal{C}$, by \cite[Theorem 3.4]{DP}, and so there is a modification $\mu:\ti{X}\to X$ with $\ti{X}$ a compact K\"ahler manifold. If $T$ is a K\"ahler current on $X$ in the class $[\alpha]$, then as in \cite[Lemma 3.5]{DP} we have that $\mu^*T+\ddbar h$ is a K\"ahler current on $\ti{X}$ in the class $[\mu^*\alpha]$ for a suitable $L^1$ function $h$ (singular only along $\mathrm{Exc}(\mu)$). The class $[\mu^*\alpha]$ is therefore nef and big, hence
$$0<\vol(\mu^*\alpha)=\int_{\ti{X}}(\mu^*\alpha)^n=\int_X\alpha^n,$$
 by Corollary \ref{nefcase}.
Then \cite[Theorem 1.18(a)]{De2} gives
$$\vol(\mu^*\alpha)\leq \inf_{\gamma \in [\mu^*\alpha]}\int_{\ti{X}(\gamma,0)}\gamma^n\leq \int_{X(\beta,0)}\beta^n,$$
where the infimum is over all smooth closed real $(1,1)$ forms $\gamma$ in the class $[\mu^*\alpha],$ and the last inequality follows by taking $\gamma=\mu^*\beta$.
\end{proof}

\begin{remark}
It is a very interesting problem to show that Theorem \ref{con} remains true if only assume that $[\alpha]$ is nef and $\int_X\alpha^n>0$. This is certainly expected to be true, thanks to Conjecture \ref{dpc} below, and conversely solving this problem would be helpful to attack this conjecture.
\end{remark}

\begin{proof}[Proof of Theorem \ref{main2}]
We follow the arguments of Demailly \cite[Section 6]{De} in the K\"ahler case, which were extended in \cite{TW1} to the non-K\"ahler case (under extra hypotheses). We will be brief wherever the arguments are the same as in \cite{TW1}, and we just will explain in detail the new ingredients needed here.

As in \cite{De, TW1} for each $j=1,\dots,N$ and each $\ve>0$ we construct a smooth nonnegative $(n,n)$-form $\gamma_{j,\ve}^n$ on $X$ with $\int_X\gamma_{j,\ve}^n=1$ and $\gamma_{j\ve}^n\rightharpoonup \delta_{x_j}$ as $\ve\to 0$.
Since $[\alpha]$ is nef, for every small $\ve>0$ we can find a smooth function $\psi_\ve$ such that $\alpha+\ve\omega+\ddbar\psi_\ve>0$, where $\omega$ is a fixed Hermitian metric on $X$. Choose also $\delta>0$ small enough so that $$\sum_{j=1}^N\tau_j^n+\delta\int_X\omega^n<\int_X\alpha^n.$$
Using the solution of the complex Monge-Amp\`ere equation on compact Hermitian manifolds \cite{TW0}, for every small $\ve>0$ we obtain a smooth function $\vp_\ve$ and a constant $C_\ve$ such that
$$\left(\alpha+\ve\omega+\ddbar(\psi_\ve+\vp_\ve)\right)^n=C_\ve\left(\sum_{j=1}^N\tau_j^n\gamma_{j,\ve}^n+\delta\omega^n\right),$$
with $$\alpha+\ve\omega+\ddbar(\psi_\ve+\vp_\ve)>0,\quad \sup_X(\psi_\ve+\vp_\ve)=0.$$
If we can show that
$$C_\ve\geq 1,$$
then the rest of the proof follows exactly as in \cite{TW1}.
To see this, let $\beta=\alpha+\ddbar(\psi_\ve+\vp_\ve)$, which is a smooth representative of $[\alpha]$. Thanks to Theorem \ref{con}, we have
\[\begin{split}
\int_X \alpha^n&\leq \int_{X(\beta,0)}(\alpha+\ddbar(\psi_\ve+\vp_\ve))^n\\
&\leq \int_{X(\beta,0)}\left(\alpha+\ve\omega+\ddbar(\psi_\ve+\vp_\ve)\right)^n\\
&=C_\ve\int_{X(\beta,0)}\left(\sum_{j=1}^N\tau_j^n\gamma_{j,\ve}^n+\delta\omega^n\right)\\
&\leq C_\ve\int_X\left(\sum_{j=1}^N\tau_j^n\gamma_{j,\ve}^n+\delta\omega^n\right)\\
&=C_\ve \left(\sum_{j=1}^N \tau_j^n+\delta\int_X\omega^n\right)<C_\ve\int_X\alpha^n,
\end{split}
\]
as required.
\end{proof}
\begin{remark}
More generally, the same argument shows that if, in the same setup as above, we solve the complex Monge-Amp\`ere equations
$$\left(\alpha+\ve\omega+\ddbar(\psi_\ve+\vp_\ve)\right)^n=C_\ve\Omega_\ve,$$
with $$\alpha+\ve\omega+\ddbar(\psi_\ve+\vp_\ve)>0,\quad \sup_X(\psi_\ve+\vp_\ve)=0,$$
where for every $\ve>0$, $\Omega_\ve$ is a smooth positive volume form with $\int_X\Omega_\ve=1$, then we have
$$C_\ve\geq \int_X\alpha^n.$$
\end{remark}

Next, we consider a well-known conjecture of Demailly-P\u{a}un \cite{DP}:
\begin{conjecture}[Demailly-P\u{a}un \cite{DP}]\label{dpc}
Let $X$ be a compact complex $n$-fold and $\alpha$ a closed real $(1,1)$ form such that its class $[\alpha]\in H^{1,1}_{\mathrm{BC}}(X,\mathbb{R})$ is nef and $\int_X\alpha^n>0$. Then $[\alpha]$ is big.
\end{conjecture}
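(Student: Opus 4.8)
This conjecture is already known when $X$ is K\"ahler: Theorem \ref{effect} above directly produces a K\"ahler current in any nef class with positive top self-intersection, so the problem is entirely about the genuinely non-K\"ahler case. The natural plan is to attack it through Lamari's Lemma \ref{lama}, which reduces the existence of a K\"ahler current in $[\alpha]$ to finding $\ve>0$ with
\[
\int_X\alpha\wedge\chi^{n-1}\geq\ve\int_X\omega\wedge\chi^{n-1}
\]
for every Gauduchon metric $\chi$, where $\omega$ is a fixed Hermitian metric. Normalizing the Gauduchon metrics so that $\int_X\omega\wedge\chi^{n-1}=1$, one argues by contradiction: assume there is a sequence $\chi_k$ of such metrics with $\int_X\alpha\wedge\chi_k^{n-1}\to 0$, and try to contradict $\int_X\alpha^n>0$ together with the nefness of $[\alpha]$.

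To extract the contradiction I would run, in the Hermitian setting, the Monge-Amp\`ere machinery used in the K\"ahler case (Lemma \ref{ineq}, Theorem \ref{effect}), following Chiose \cite{Ch}, Popovici \cite{Po} and Nguyen \cite{Ng}. Using nefness, fix a small $\ve>0$ and a Hermitian metric $\omega_\ve:=\alpha+\ve\omega+\ddbar\psi_\ve>0$; using the Hermitian Calabi-Yau theorem \cite{TW0}, solve
\[
(\omega_\ve+\ddbar\vp_k)^n=C_k\,\omega_\ve\wedge\chi_k^{n-1},\qquad \omega_\ve+\ddbar\vp_k>0,\qquad \sup_X\vp_k=0,
\]
for a smooth function $\vp_k$ and a constant $C_k>0$. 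Integrating gives $\int_X(\omega_\ve+\ddbar\vp_k)^n=C_k\int_X\omega_\ve\wedge\chi_k^{n-1}$, which tends to $\ve C_k$ as $k\to\infty$, while the trace and Cauchy-Schwarz manipulations of Lemma \ref{ineq}, applied to the pair $\omega_\ve+\ddbar\vp_k$, $\omega_\ve$ with Gauduchon factor $\chi_k$ (and pursued further along the lines of Chiose and Nguyen), bound $\int_X(\omega_\ve+\ddbar\vp_k)^n$ in terms of quantities that, in the K\"ahler case, are all cohomological — there $\int_X(\omega_\ve+\ddbar\vp_k)^n$ equals $\int_X\alpha_\ve^n>0$ — and force a contradiction. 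That is exactly how Theorem \ref{effect} works. In the non-K\"ahler case the same manipulations reduce everything to controlling the constant $C_k$.

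The main obstacle — and the reason the conjecture is open — is precisely the behaviour of $C_k$: on a non-K\"ahler manifold $\omega_\ve$ is not closed, $\int_X(\omega_\ve+\ddbar\vp_k)^n$ is not a cohomological invariant and may well decay as $k\to\infty$, so one has no a priori lower bound on $C_k$; this is the constant $b$ in \eqref{manew} resurfacing. The crux is to produce a uniform bound $C_k\geq c>0$, and the natural route is a weak transcendental Morse inequality of the type in Theorem \ref{con}: if one knew $\int_X\alpha^n\leq\int_{X(\beta,0)}\beta^n$ for every smooth closed representative $\beta=\alpha+\ddbar f$ of $[\alpha]$ under the mere hypothesis that $[\alpha]$ is nef with $\int_X\alpha^n>0$ (rather than nef and big), then applying it to $\beta_k:=\alpha+\ddbar(\psi_\ve+\vp_k)$ and using $\beta_k^n\leq(\beta_k+\ve\omega)^n=(\omega_\ve+\ddbar\vp_k)^n$ pointwise on $X(\beta_k,0)$ would give $C_k\geq\int_X\alpha^n/(\ve+o(1))$, exactly as in the proof of Theorem \ref{main2}, and the argument would close. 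As noted in the remark following Theorem \ref{con}, such an unconditional Morse inequality is not known and is at least as hard as the conjecture; one would presumably have to push the envelope-regularity methods of Berman-Demailly \cite{BD} well beyond the big case. The known partial results — $n\leq 3$, or $X$ Moishezon with $[\alpha]$ rational \cite{TW1}, or $\alpha$ semipositive \cite{Ng} — precisely correspond to situations where $C_k$ (equivalently $b$) can be controlled by other means, and I expect essentially all the difficulty of the general case to be concentrated there.
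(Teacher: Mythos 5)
The statement you were asked about is Conjecture \ref{dpc}, which is an open conjecture: the paper does not prove it either, and you are right not to claim a proof. What you have written is a strategy outline, and as such it matches the paper's own discussion quite closely: the reduction via Lamari's Lemma \ref{lama} to a uniform inequality over Gauduchon metrics $\chi$, the use of the Hermitian Calabi--Yau theorem of \cite{TW0} to solve a Monge--Amp\`ere equation with right-hand side proportional to $\omega\wedge\chi^{n-1}$, the Cauchy--Schwarz manipulations of Lemma \ref{ineq}, and the identification of the non-cohomological constant (the $b$ of \eqref{manew}) as the crux. The paper carries this out explicitly in its proof of Chiose's theorem, where the whole argument is reduced to the single inequality \eqref{want2}, and the special hypotheses ($\de\db\omega=\de\db(\omega^2)=0$, or $n\le 3$, or $\alpha$ semipositive) are exactly what make the relevant integrals controllable, as you say.

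There is, however, one concrete overstatement you should correct: your claim that an unconditional version of Theorem \ref{con} (the weak transcendental Morse inequality for merely nef classes with $\int_X\alpha^n>0$) ``would close the argument.'' It would not, at least not by the route you describe. Applying such an inequality to $\beta_k=\alpha+\ddbar(\psi_\ve+\vp_k)$ yields only a \emph{lower} bound $\int_X\ti{\alpha}_\ve^n\ge\int_X\alpha^n$ (equivalently a lower bound on your $C_k$), whereas the inequality \eqref{want2} that actually has to be verified is
\begin{equation*}
n\ve\int_X\omega\wedge\ti{\alpha}_\ve^{n-1}\leq (1-\delta')\int_X\ti{\alpha}_\ve^n,
\end{equation*}
which compares two non-cohomological quantities; a lower bound on the right-hand side says nothing about the left-hand side, which could blow up as well. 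Indeed, for $n=3$ the paper shows that the missing ingredient is an \emph{upper} bound $\int_X\ti{\alpha}_\ve^3\le C$ (equivalently $\ve\int_X\ti{\alpha}_\ve^2\wedge\omega\le C$), not a lower bound, and the Remark after Theorem \ref{con} accordingly only says that extending the Morse inequality to the nef case ``would be helpful to attack'' the conjecture, not that it would settle it. So your diagnosis of where the difficulty sits is essentially right, but the proposed sufficient condition is weaker than what is actually needed.
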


When $X$ is K\"ahler, this was proved in \cite{DP} using the mass concentration technique. Note that this is now a direct consequence of Theorem \ref{effect}.

When $n=2$ the manifold $X$ as in Conjecture \ref{dpc} is necessarily K\"ahler (see e.g. \cite[p.4005]{TW1}), and so the conjecture holds. Very recently, Chiose \cite{Ch} has proved the following special case of this conjecture:

\begin{theorem}[Chiose \cite{Ch}]
Conjecture \ref{dpc} holds if $X$ admits a Hermitian metric $\omega$ with $\de\db\omega=0=\de\db(\omega^2)$.
\end{theorem}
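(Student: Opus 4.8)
The plan is to run the mechanism behind Theorem~\ref{effect} --- Popovici's inequality (Lemma~\ref{ineq}) combined with Lamari's Lemma~\ref{lama} --- in the Hermitian setting; the hypotheses $\de\db\omega=0$ and $\de\db(\omega^2)=0$ will be used precisely to control the a priori unknown constant $b$ appearing in the Hermitian Calabi--Yau theorem \cite{TW0}. The preliminary observation is elementary: if two Hermitian metrics $\eta_1,\eta_2$ are each $\de\db$-closed and satisfy $\de\eta_i\wedge\db\eta_j=0$ for all $i,j$, then $\de\db(\eta_1^{k_1}\wedge\eta_2^{k_2}\wedge(\ddbar v)^l)=0$ for all $k_1,k_2,l$ and every smooth $v$ (Leibniz rule, using that $\ddbar v$ is $\de$- and $\db$-closed). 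Now $\de\db(\omega^2)=0$ is, given $\de\db\omega=0$, equivalent to $\de\omega\wedge\db\omega=0$; and since $[\alpha]$ is nef, for each $\ve>0$ there is a smooth $\psi_\ve$ with $\beta_\ve:=\alpha+\ve\omega+\ddbar\psi_\ve>0$. Because $\beta_\ve-\ve\omega=\alpha+\ddbar\psi_\ve$ is $\de$- and $\db$-closed, $\beta_\ve$ inherits $\de\db\beta_\ve=0$ and $\de\beta_\ve\wedge\db\beta_\ve=\ve^2\de\omega\wedge\db\omega=0$, and likewise $\de\omega\wedge\db\beta_\ve=\de\beta_\ve\wedge\db\omega=0$; so the observation applies with $\eta_1=\omega,\eta_2=\beta_\ve$, and Stokes' theorem yields
$$\int_X(\beta_\ve+\ddbar u)^n=\int_X\beta_\ve^n=\int_X(\alpha+\ve\omega)^n,\qquad \int_X\omega\wedge(\beta_\ve+\ddbar u)^{n-1}=\int_X\omega\wedge\beta_\ve^{n-1}$$
for every smooth $u$; moreover $\int_X\beta_\ve^n\to\int_X\alpha^n>0$ and $\int_X\omega\wedge\beta_\ve^{n-1}\to\int_X\omega\wedge\alpha^{n-1}<\infty$ as $\ve\downarrow 0$.

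Next, fix a small $\ve>0$ and let $\chi$ be any Gauduchon metric. I would solve, using \cite{TW0} with reference metric $\beta_\ve$, the complex Monge-Amp\`ere equation
$$(\beta_\ve+\ddbar u)^n=e^b\,\omega\wedge\chi^{n-1},\qquad \ti{\beta}:=\beta_\ve+\ddbar u>0,$$
for a smooth $u$ and a constant $b$. The key point is that, by the preliminary step, $e^b\int_X\omega\wedge\chi^{n-1}=\int_X\ti{\beta}^n=\int_X\beta_\ve^n$, so $b$ is fully determined: $e^b=\int_X\beta_\ve^n\big/\int_X\omega\wedge\chi^{n-1}$. With $b$ pinned down, the pointwise manipulations in the proof of Lemma~\ref{ineq} (the inequality $\tr{\chi}{\omega}\le(\tr{\chi}{\ti{\beta}})(\tr{\ti{\beta}}{\omega})$ followed by Cauchy--Schwarz) carry over verbatim and give
$$\int_X\beta_\ve^n\ \le\ n\,\frac{\bigl(\int_X\omega\wedge\ti{\beta}^{n-1}\bigr)\bigl(\int_X\ti{\beta}\wedge\chi^{n-1}\bigr)}{\int_X\omega\wedge\chi^{n-1}}.$$
In the two numerator factors one replaces $\ti{\beta}$ by $\beta_\ve$ (legitimate: $\int_X\omega\wedge\ti{\beta}^{n-1}=\int_X\omega\wedge\beta_\ve^{n-1}$ by the preliminary step, and $\int_X\ti{\beta}\wedge\chi^{n-1}=\int_X\beta_\ve\wedge\chi^{n-1}$ since $\int_X\ddbar u\wedge\chi^{n-1}=0$ because $\chi$ is Gauduchon), and rearranging one obtains, for every Gauduchon $\chi$,
$$\int_X\beta_\ve\wedge\chi^{n-1}\ \ge\ \frac{\int_X\beta_\ve^n}{n\int_X\omega\wedge\beta_\ve^{n-1}}\ \int_X\omega\wedge\chi^{n-1}.$$

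Finally, since $\int_X\ddbar\psi_\ve\wedge\chi^{n-1}=0$ for Gauduchon $\chi$, the left-hand side equals $\int_X\alpha\wedge\chi^{n-1}+\ve\int_X\omega\wedge\chi^{n-1}$, whence
$$\int_X\alpha\wedge\chi^{n-1}\ \ge\ \Bigl(\tfrac{\int_X\beta_\ve^n}{n\int_X\omega\wedge\beta_\ve^{n-1}}-\ve\Bigr)\int_X\omega\wedge\chi^{n-1}$$
for all Gauduchon $\chi$. By the preliminary step the right-hand coefficient converges, as $\ve\downarrow 0$, to $\int_X\alpha^n\big/\bigl(n\int_X\omega\wedge\alpha^{n-1}\bigr)>0$ (read as $+\infty$ if $\int_X\omega\wedge\alpha^{n-1}=0$), so fixing $\ve$ small enough makes it a positive constant $\delta$ independent of $\chi$; thus $\int_X\alpha\wedge\chi^{n-1}\ge\delta\int_X\omega\wedge\chi^{n-1}$ for every Gauduchon metric $\chi$, and Lamari's Lemma~\ref{lama} produces a K\"ahler current in $[\alpha]$, i.e.\ $[\alpha]$ is big. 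I expect the main obstacle to be exactly the preliminary step: for a general Hermitian reference metric the Calabi--Yau constant $b$ genuinely depends on the unknown potential $u$ and the mass $\int_X(\beta_\ve+\ddbar u)^n$ is not cohomological, so the displayed inequality would not close up --- it is the two conditions $\de\db\omega=0$, $\de\db(\omega^2)=0$ that repair this.
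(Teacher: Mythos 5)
Your proposal is correct and follows essentially the same route as the paper: solve the Hermitian Monge--Amp\`ere equation of \cite{TW0} with right-hand side proportional to $\omega\wedge\chi^{n-1}$, run Popovici's trace/Cauchy--Schwarz argument from Lemma \ref{ineq}, use the conditions $\de\db\omega=0=\de\db(\omega^2)$ to make the integrals $\int_X(\cdot)^k\wedge\omega^{n-k}$ insensitive to adding $\ddbar u$ (thereby pinning down the constant $b$), and conclude via Lamari's Lemma \ref{lama}. The only differences are cosmetic (you fix $e^b$ at the outset rather than eliminating it at the end, and you normalize the final inequality against $\alpha$ instead of $\alpha_\ve$).
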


We now give a proof of this result using the method of Popovici \cite{Po}.
\begin{proof}
By assumption, for every $\ve>0$ there is a smooth function $\psi_\ve$ such that
$$\alpha_\ve:=\alpha+\ve\omega+\ddbar \psi_\ve>0.$$
If we can show that there exist $\ve,\delta>0$ such that for every Gauduchon metric $\chi$ on $X$ we have
\begin{equation}\label{want}
\int_X \alpha_\ve\wedge\chi^{n-1}\geq \ve(1+\delta)\int_X \omega\wedge\chi^{n-1},
\end{equation}
then Lamari's Lemma \ref{lama} would give us a distribution $\vp$ such that
$$\alpha_\ve+\ddbar\vp \geq \ve(1+\delta)\omega,$$
and so
$$\alpha+\ddbar(\psi_\ve+\vp)\geq\delta\ve\omega,$$
and we would be done. To prove \eqref{want}, given any $\ve>0$ and $\chi$ as above we solve the complex Monge-Amp\`ere equation \cite{TW0}
$$(\alpha_\ve+\ddbar \vp_\ve)^n=e^{b_\ve}\frac{\omega\wedge\chi^{n-1}}{\int_X\omega\wedge\chi^{n-1}},\quad \ti{\alpha}_\ve:=\alpha_\ve+\ddbar \vp_\ve>0,$$
where $\vp_\ve$ is a smooth function and $b_\ve\in\mathbb{R}$. We have
\[\begin{split}e^{b_\ve}&=\int_X \ti{\alpha}_\ve^n=e^{\frac{b_\ve}{2}}\left(\int_X \left(\frac{\ti{\alpha}_\ve^n}{\chi^n}\right)^{\frac{1}{2}}\left(\frac{\omega\wedge\chi^{n-1}}{\chi^n}\right)^{\frac{1}{2}}
\chi^n\right)\left(\int_X\omega\wedge\chi^{n-1}\right)^{-\frac{1}{2}}\\
&=\frac{e^{\frac{b_\ve}{2}}}{\sqrt{n}}\left(\int_X \left(\frac{\ti{\alpha}_\ve^n}{\chi^n}\right)^{\frac{1}{2}}\left(\tr{\chi}{\omega}\right)^{\frac{1}{2}}
\chi^n\right)\left(\int_X\omega\wedge\chi^{n-1}\right)^{-\frac{1}{2}},\end{split}\]
and dividing out and squaring
\[\begin{split}e^{b_\ve}&=\frac{1}{n}\left(\int_X \left(\frac{\ti{\alpha}_\ve^n}{\chi^n}\right)^{\frac{1}{2}}\left(\tr{\chi}{\omega}\right)^{\frac{1}{2}}
\chi^n\right)^2\left(\int_X\omega\wedge\chi^{n-1}\right)^{-1}\\
&\leq \frac{1}{n}\left(\int_X \left(\frac{\ti{\alpha}_\ve^n}{\chi^n}\tr{\ti{\alpha}_\ve}{\omega}\right)^{\frac{1}{2}}\left(\tr{\chi}{\ti{\alpha}_\ve}\right)^{\frac{1}{2}}
\chi^n\right)^2\left(\int_X\omega\wedge\chi^{n-1}\right)^{-1}\\
&\leq \frac{1}{n}\left(\int_X (\tr{\ti{\alpha}_\ve}{\omega})\ti{\alpha}_\ve^n\right)\left(\int_X (\tr{\chi}{\ti{\alpha}_\ve})\chi^n\right)\left(\int_X\omega\wedge\chi^{n-1}\right)^{-1}\\
&=n\left(\int_X \omega\wedge\ti{\alpha}_\ve^{n-1}\right)\left(\int_X\ti{\alpha}_\ve\wedge\chi^{n-1}\right)\left(\int_X\omega\wedge\chi^{n-1}\right)^{-1},\end{split}\]
i.e.
$$\frac{\int_X\alpha_\ve\wedge\chi^{n-1}}{\ve\int_X\omega\wedge\chi^{n-1}}\geq \frac{\int_X\ti{\alpha}_\ve^n}{n\ve\int_X\omega\wedge\ti{\alpha}_\ve^{n-1}}.$$
Therefore to prove \eqref{want} it is enough to show that there exist $\ve,\delta'>0$, independent of $\chi$, such that
\begin{equation}\label{want2}
n\ve\int_X\omega\wedge\ti{\alpha}_\ve^{n-1}\leq (1-\delta')\int_X\ti{\alpha}_\ve^n.
\end{equation}
We now use the assumption that $\de\db\omega=0=\de\db(\omega^2)$. An elementary calculation shows that this implies that $\de\db(\omega^k)=0$ for all $1\leq k\leq n-1$. Therefore for all
$1\leq k\leq n$ we have
$$\int_X \ti{\alpha}_\ve^k\wedge\omega^{n-k}=\int_X\alpha^k\wedge\omega^{n-k},$$
and \eqref{want2} reduces to proving
$$n\ve\int_X\omega\wedge\alpha^{n-1}\leq (1-\delta')\int_X\alpha^n,$$
which is obvious when we take $\ve$ small enough.
\end{proof}

As this proof shows, to solve Conjecture \ref{dpc} in general, it would suffice to show that \eqref{want2} holds. This seems very hard to prove, without assuming that $\de\db\omega=0=\de\db(\omega^2)$. However, when $n=3$, the weaker inequality
\begin{equation}\label{want3}
3\ve\int_X\omega\wedge\ti{\alpha}_\ve^{2}< \int_X\ti{\alpha}_\ve^3,
\end{equation}
holds in general, as long as $\ve$ is small enough (independent of $\chi$), and provided we choose $\omega$ Gauduchon. In other words, we can prove \eqref{want2}, but the constant $\delta'$ may depend on $\chi$. Indeed, if we set $u_\ve:=\psi_\ve+\vp_\ve$ and expand
\[\begin{split}
3\ve\int_X\omega\wedge\ti{\alpha}_\ve^2&=3\ve\int_X\omega\wedge(\alpha+\ddbar u_\ve)^2+6\ve^2\int_X\omega^2\wedge(\alpha+\ddbar u_\ve)\\
&\ \ \ \ +3\ve^3\int_X\omega^3\\
&=3\ve\int_X\omega\wedge(\alpha+\ddbar u_\ve)^2+6\ve^2\int_X\omega^2\wedge\alpha+3\ve^3\int_X\omega^3,
\end{split}\]
using that $\omega$ is Gauduchon, while
\[\begin{split}
\int_X\ti{\alpha}_\ve^3&=\int_X(\alpha+\ddbar u_\ve)^3+3\ve\int_X (\alpha+\ddbar u_\ve)^2\wedge\omega\\
&\ \ \ \ +
3\ve^2\int_X (\alpha+\ddbar u_\ve)\wedge\omega^2+\ve^3\int_X\omega^3\\
&=\int_X\alpha^3+3\ve\int_X (\alpha+\ddbar u_\ve)^2\wedge\omega+3\ve^2\int_X \alpha\wedge\omega^2+\ve^3\int_X\omega^3,
\end{split}\]
and so
\[\begin{split}
\int_X\ti{\alpha}_\ve^3-3\ve\int_X\omega\wedge\ti{\alpha}_\ve^2&=
\int_X\alpha^3-3\ve^2\int_X \alpha\wedge\omega^2-2\ve^3\int_X\omega^3>0,
\end{split}\]
as long as $\ve$ is small enough, which proves \eqref{want3}.

Note that to promote this to \eqref{want2}, it would be enough to show that $\int_X\ti{\alpha}_\ve^3\leq C$, independent of $\ve$, or equivalently
$\ve\int_X\ti{\alpha}_\ve^2\wedge\omega\leq C$.

Furthermore, these calculations also show that when $n=3$ Conjecture \ref{dpc} holds if we just assume that $\de\db\omega=0$, a fact which was observed by Chiose \cite{Ch}.

Lastly, we mention that very recently Nguyen \cite{Ng} has proved Conjecture \ref{dpc} assuming that the form $\alpha$ is semipositive definite, and that there exists a Hermitian metric $\omega$ with $\de\db\omega=0$.

\bigskip
\bigskip


\begin{thebibliography}{99}
\bibitem{BD} R. Berman, J.-P. Demailly {\em Regularity of plurisubharmonic upper envelopes in big cohomology classes}, in {\em Perspectives in analysis, geometry, and topology}, 39--66, Progr. Math., 296, Birkh\"auser/Springer, New York, 2012.
\bibitem{Bl} Z. B\l ocki {\em On the uniform estimate in the Calabi-Yau theorem, II}, Sci. China Math. {\bf 54} (2011), no. 7, 1375--1377.
\bibitem{Bo} S. Boucksom {\em On the volume of a line bundle}, Internat. J. Math. {\bf 13} (2002), no. 10, 1043--1063.
\bibitem{Bo2} S. Boucksom {\em Divisorial Zariski decompositions on compact complex manifolds}, Ann. Sci. \'Ecole Norm. Sup. (4) {\bf 37} (2004), no. 1, 45--76.
\bibitem{BDPP} S. Boucksom, J.-P. Demailly, M. P\u{a}un, T. Peternell {\em The pseudo-effective cone of a compact K\"ahler manifold and varieties of negative Kodaira dimension}, J. Algebraic Geom. {\bf 22} (2013), no. 2, 201--248.
\bibitem{Ca} E. Calabi {\em The space of K\"ahler metrics}, In {\em Proceedings of the International Congress of Mathematicians, Amsterdam, 1954},
Vol. 2, 206--207, North-Holland, Amsterdam, 1956.
\bibitem{Che} P. Cherrier {\em \'Equations de Monge-Amp\`ere sur les vari\'et\'es Hermitiennes compactes}, Bull. Sc. Math (2) {\bf 111} (1987), 343--385.
\bibitem{Ch} I. Chiose  {\em The K\"ahler rank of compact complex manifolds}, arXiv:1308.2043.
\bibitem{Chu} J. Chu {\em The complex Monge-Amp\`ere equation on some compact Hermitian manifolds}, arXiv:1404.5445.
\bibitem{CT} T. Collins, V. Tosatti {\em K\"ahler currents and null loci}, to appear in Invent. Math.
\bibitem{Dem92} J.-P. Demailly \emph{Regularization of closed positive currents and intersection theory}, J. Algebraic Geom. {\bf 1} (1992), no. 3, 361--409.
\bibitem{De3} J.-P. Demailly \emph{Singular Hermitian metrics on positive line bundles}, in {\em Complex algebraic varieties (Bayreuth, 1990)}, 87--104, Lecture Notes in Math., 1507, Springer, Berlin, 1992.
\bibitem{De} J.-P. Demailly {\em A numerical criterion for very ample line bundles}, J. Differential Geom. {\bf 37} (1993), no. 2, 323--374.
\bibitem{De2} J.-P. Demailly {\em Holomorphic Morse inequalities and asymptotic cohomology groups: a tribute to Bernhard Riemann}, Milan J. Math. {\bf 78} (2010), no. 1, 265--277.
\bibitem{Demb} J.-P. Demailly {\em Complex Analytic and Differential Geometry}, available on the author's webpage.
\bibitem{DP} J.-P. Demailly, M. P\u{a}un \emph{Numerical characterization of the K\"ahler cone of a compact K\"ahler manifold}, Ann. of Math., {\bf 159} (2004), no. 3, 1247--1274.
\bibitem{DK} S. Dinew, S. Ko\l odziej {\em Pluripotential estimates on compact Hermitian manifolds}, in {\em Advances in geometric analysis,} 69--86, Adv. Lect. Math. (ALM), 21, Int. Press, Somerville, MA, 2012.
\bibitem{ELMNP} L. Ein, R. Lazarsfeld, M. Musta\c{t}\u{a}, M. Nakamaye, M. Popa {\em Restricted volumes and base loci of linear series}, Amer. J. Math. {\bf 131} (2009), no. 3, 607--651.
\bibitem{FLY} J. Fu, J. Li, S.-T. Yau {\em Balanced metrics on non-K\"ahler Calabi-Yau threefolds},  J. Differential Geom. {\bf 90} (2012), no. 1, 81--129.
\bibitem{FWW1} J. Fu, Z. Wang, D. Wu {\em Form-type Calabi-Yau equations}, Math. Res. Lett. {\bf 17} (2010), no. 5, 887--903.
\bibitem{FWW2} J. Fu, Z. Wang, D. Wu {\em Form-type Calabi-Yau equations on K\"ahler manifolds of nonnegative orthogonal bisectional curvature},  Calc. Var. Partial Differential Equations {\bf 52} (2015), no. 1-2, 327--344.
\bibitem{FY} J. Fu, S.-T. Yau {\em The theory of superstring with flux on non-K\"ahler manifolds and the complex Monge-Amp\`ere equation}, J. Differential Geom. {\bf 78} (2008), no. 3, 369--428.
\bibitem{Ga} P. Gauduchon {\em Le th\'eor\`eme de l'excentricit\'e nulle}, C. R. Acad. Sci. Paris S\'er. A-B {\bf 285} (1977), no. 5, A387--A390.
\bibitem{GL} B. Guan, Q. Li {\em  Complex Monge-Amp\`ere equations and totally real submanifolds}, Adv. Math. {\bf 225} (2010), no. 3, 1185--1223.
\bibitem{Hi} H. Hironaka {\em Bimeromorphic smoothing of a complex-analytic space}, Acta Math. Vietnam. {\bf 2} (1977), no.2, 103--168.
\bibitem{KN1} S. Ko\l odziej, N.C. Nguyen {\em Weak solutions to the complex Monge-Amp\`ere equation on Hermitian manifolds}, arXiv:1312.5491.
\bibitem{KN2} S. Ko\l odziej, N.C. Nguyen {\em Stability and regularity of solutions of the Monge-Amp\`ere equation on Hermitian manifolds }, arXiv:1501.05749.
\bibitem{La} A. Lamari {\em Courants k\"ahl\'eriens et surfaces compactes}, Ann. Inst. Fourier (Grenoble) {\bf 49} (1999), no.1, 263--285.
\bibitem{Laz} R. Lazarsfeld {\em Positivity in algebraic geometry I \& II}, Springer-Verlag, Berlin, 2004.
\bibitem{LY} J. Li, S.-T. Yau {\em The existence of supersymmetric string theory with torsion}, J. Differential Geom. {\bf 70} (2005), no. 1, 143--181.
\bibitem{Mi} M.L. Michelsohn {\em On the existence of special metrics in complex geometry}, Acta Math. {\bf 149} (1982), no. 3-4, 261--295.
\bibitem{Nak} M. Nakamaye \emph{Stable base loci of linear series}, Math. Ann., {\bf 318} (2000), no. 4, 837--847.
\bibitem{Ng} N.C. Nguyen {\em The complex Monge-Amp\`ere type equation on compact Hermitian manifolds and applications}, arXiv:1501.00891.
\bibitem{Nie} X. Nie {\em Regularity of a complex Monge-Amp\`ere equation on Hermitian manifolds}, Comm. Anal. Geom. {\bf 22} (2014), no. 5, 833--856.
\bibitem{PSS} D.H. Phong, J. Song, J. Sturm {\em Complex Monge Amp\`ere Equations}, in {\em Surveys in Differential Geometry} {\bf 17}, 327--411, International Press, 2012.
\bibitem{Po1} D. Popovici {\em Aeppli cohomology classes associated with Gauduchon metrics on compact complex manifolds}, arXiv:1310.3685.
\bibitem{Po} D. Popovici {\em An observation relative to a paper by J. Xiao}, arXiv:1405.2518.
\bibitem{Si} Y.-T. Siu {\em Calculus inequalities derived from holomorphic Morse inequalities}, Math. Ann. {\bf 286} (1990), no. 1-3, 549--558.
\bibitem{Su} D. Sullivan {\em Cycles for the dynamical study of foliated manifolds and complex manifolds}, Invent. Math. {\bf 36} (1976), 225--255.
\bibitem{STW} G. Sz\'ekelyhidi, V. Tosatti, B. Weinkove {\em Gauduchon metrics with prescribed volume form}, arXiv:1503.04491.
\bibitem{TW0} V. Tosatti, B. Weinkove {\em The complex Monge-Amp\`ere equation on compact Hermitian manifolds}, J. Amer. Math. Soc. {\bf 23} (2010), no. 4, 1187--1195.
\bibitem{TW5} V. Tosatti, B. Weinkove {\em Estimates for the complex Monge-Amp\`ere equation on Hermitian and balanced manifolds}, Asian J. Math. {\bf 14} (2010), no.1, 19--40.
\bibitem{TW1} V. Tosatti, B. Weinkove {\em Plurisubharmonic functions and nef classes on complex manifolds}, Proc. Amer. Math. Soc. {\bf 140} (2012), no. 11, 4003--4010.
\bibitem{TW2} V. Tosatti, B. Weinkove {\em On the evolution of a Hermitian metric by its Chern-Ricci form}, J. Differential Geom. {\bf 99} (2015), no. 1, 125--163.
\bibitem{TW3} V. Tosatti, B. Weinkove {\em The Monge-Amp\`ere equation for $(n-1)$-plurisubharmonic functions on a compact K\"ahler manifold}, arXiv:1305.7511.
\bibitem{TW4} V. Tosatti, B. Weinkove {\em Hermitian metrics, $(n-1, n-1)$ forms and Monge-Amp\`ere equations}, arXiv:1310.6326.
\bibitem{Xi} J. Xiao {\em Weak transcendental holomorphic Morse inequalities on compact K\"ahler manifolds}, arXiv:1308.2878.
\bibitem{Ya} S.-T. Yau  {\em On the Ricci curvature of a compact K\"ahler manifold and the complex Monge-Amp\`ere equation, I}, Comm. Pure Appl. Math. {\bf 31} (1978), no.3, 339--411.
\end{thebibliography}
\end{document}